\date{}
\renewcommand{\uppercasenonmath}[1]{}
\theoremstyle{plain}
\newtheorem{theorem}{Theorem}[section]
\newtheorem{proposition}[theorem]{Proposition}
\newtheorem{lemma}[theorem]{Lemma}
\newtheorem{corollary}[theorem]{Corollary}
\newtheorem{example}[theorem]{Example}
\newtheorem*{open question}{Open Question}
\newtheorem{definition}[theorem]{Definition}
\theoremstyle{definition}
\newtheorem*{acknowledgement}{Acknowledgement}
\theoremstyle{remark}
\newtheorem{remark}[theorem]{Remark}
\newcommand{\Id}{\mathrm{Id}}
\def\p{\frak p}
\def\m{\frak m}
\def\GV{{\rm GV}}
\def\tor{{\rm tor_{\rm GV}}}
\def\Hom{{\rm Hom}}
\def\Ext{{\rm Ext}}
\def\Ker{{\rm Ker}}
\def\Im{{\rm Im}}
\def\Coker{{\rm Coker}}
\def\GV{{\rm GV}}
\def\Max{{\rm Max}}
\def\DW{{\rm DW}}
\def\Spec{{\rm Spec}}
\def\Max{{\rm Max}}
\begin{document}
\begin{center}
{\large  \bf On (uniformly) $S$-$w$-Noetherian rings and modules}

\vspace{0.5cm}  Xiaolei Zhang$^{a}$

{\footnotesize a.\ School of Mathematics and Statistics, Shandong University of Technology, Zibo 255049, China\\

}
\end{center}

\bigskip
\centerline { \bf  Abstract}
\bigskip
\leftskip10truemm \rightskip10truemm \noindent

Let $R$ be a commutative ring and $S$  a multiplicative subset of $R$. We introduce and study the notions of ($u$-)$S$-$w$-Noetherian modules and ($u$-)$S$-$w$-principal ideal modules. Some characterizations of these new concepts are given. 
\vbox to 0.3cm{}\\
{\it Key Words:} $S$-$w$-Noetherian module; $u$-$S$-$w$-Noetherian module; $S$-$w$-principal ideal module; $u$-$S$-$w$-principal ideal module; countably generated module.\\
{\it 2020 Mathematics Subject Classification:}  13E05, 13F10, 13A15.

\leftskip0truemm \rightskip0truemm
\bigskip

\section{Introduction}
Throughout this paper, $R$ is always a commutative ring with identity. The spectrum of all prime ideals and maximal ideals is denoted by $\Spec(R)$ and $\Max(R)$, respectively. For a subset $U$ of  an $R$-module $M$, we denote by $\langle U\rangle$ the $R$-submodule of $M$ generated by $U$. A subset $S$ of $R$ is called a multiplicative subset of $R$ if $1\in S$ and $s_1s_2\in S$ for any $s_1\in S$, $s_2\in S$. 

Recall from  Anderson and Dumitrescu \cite{ad02} that an ideal $I$ of a ring $R$ is said to be  \emph{$S$-finite} $($resp.,  \emph{$S$-principal}$)$ $($with respect to $s)$ if there is a finitely (resp., principally) generated sub-ideal $K$ of $I$  such that $sI\subseteq K$ for some $s\in S$. A ring $R$ is  called an \emph{$S$-Noetherian ring} (resp., \emph{$S$-PIR}) if every ideal of $R$ is $S$-finite (resp., $S$-principal). Futhermore, Cohen's Theorem, Eakin-Nagata Theorem and Hilbert Basis Theorem for $S$-Noetherian rings are obtained in \cite{ad02}. Since then, many algebraists  have paid considerable attention to the notion of $S$-Noetherian rings \cite{bh18,hh15,as16,l15,lO14,lO15,l07,kmz21}. Recently,  Qi and Kim et al. \cite{qkwcz22} introduced the notion of  \emph{uniformly $S$-Noetherian rings} (resp., \emph{uniformly $S$-PIRs}), i.e., rings $R$ satisfying that every ideal of $R$ is $S$-finite (resp., $S$-principal) with respect to a  ``uniform'' $s\in S$. Moreover, some moduletic characterizations of uniformly $S$-Noetherian rings are given in \cite{qkwcz22}.

Since the birth of star operation, a lot of attention has been paid to the ideal theory. In 1997, Wang and McCasland {\cite{fm97}} introduced the notion of $w$-operations over integral domains to study SM domains (i.e. $w$-Noetherian domains). The notion of $w$-modules they introduced makes it possible to study modules in terms of star operations. A significant development about $w$-theory is in the literature {\cite{hfxc11}} in 2011. The authors Yin et al. \cite{hfxc11}  extended $w$-operation theory to commutative rings with zero divisors to study the so-called $w$-Noetherian rings.  In the past decade, some works on $w$-Noetherian rings have continuously appeared \cite{x22,xw15,yc12,zwk,zkq23,zkzx22}. 

In 2014, Kim et al. \cite{kkl14} investigated an $S$-version of strong Mori domains, i.e., $S$-strong Mori domains. They actually gave the Hilbert basis Theorem and Cohen's Theorem for $S$-Mori domains. Subsequently,  Ahmed \cite{A18} studied $S$-Mori domains, which can be seen as the $S$-Noetherian domains under $v$-operation.

We give a quick review of star operations over integral domains and $w$-operations over commutative rings for convenience. For more details, the reader can consult {\cite[Chapters 6,7]{fk16}}.

Let $D$ be an integral domain with its quotient field $Q$ and $\mathscr{F}(D)$ the set of all nonzero
fractional ideals of $D.$ A set map $\ast:\mathscr{F}(D)\rightarrow \mathscr{F}(D)$ is said to be a  \emph{star operation} provided
it satisfies the following properties: for all $0\not=a \in Q$ and $A, B\in \mathscr{F}(D)$,  we have
\begin{enumerate}
	\item $\langle a\rangle_\ast=\langle a\rangle$ and $aA_{\ast}=(aA)_{\ast}$;
	\item $A\subseteq A_{\ast}$, and if $A\subseteq B$ then $A_{\ast}\subseteq B_{\ast}$;
	\item $A_{\ast}= (A_{\ast})_{\ast}$.
\end{enumerate}
For example, $v$-operation, $t$-operation and $w$-operation are three classical star operations over integral domains.

In 2011, Yin et al. \cite{hfxc11}  extended $w$-operation theory to commutative rings. Let $R$ be a commutative ring and $J$ a finitely generated ideal of $R$. Then $J$ is called a \emph{$\GV$-ideal} if the natural homomorphism $R\rightarrow \Hom_R(J,R)$ is an isomorphism. The set of all $\GV$-ideals is denoted by $\GV(R)$. Let $M$ be an $R$-module. Define

\begin{center}
	{\rm $\tor(M):=\{x\in M|Jx=0$, for some $J\in \GV(R) \}.$}
\end{center}
An $R$-module $M$ is said to be \emph{$\GV$-torsion} (resp. \emph{$\GV$-torsion-free}) if $\tor(M)=M$ (resp. $\tor(M)=0$). A $\GV$-torsion-free module $M$ is called a \emph{$w$-module} if $\Ext_R^1(R/J,M)=0$ for any $J\in \GV(R)$, and the \emph{$w$-envelope} of $M$ is given by
\begin{center}
	{\rm $M_w:=\{x\in E(M)|Jx\subseteq M$, for some $J\in \GV(R) \},$}
\end{center}
where $E(M)$ is the injective envelope of $M$. Therefore, $M$ is a $w$-module if and only if $M_w=M$.  A \emph{$\DW$ ring} $R$ is a ring for which every $R$-module is a $w$-module. 
The set of all prime $w$-ideals is denoted by $ w$-$\Spec(R)$.
A \emph{maximal $w$-ideal} is an ideal of $R$ which is maximal among the $w$-submodules of $R$. The set of all maximal $w$-ideals is denoted by $ w$-$\Max(R)$. All maximal $w$-ideals are  prime ideals (see {\cite[Theorem 6.2.14]{fk16}}).

 A sequence $A\rightarrow B\rightarrow C$ of $R$-modules is said to be \emph{$w$-exact} if for any $\p\in w$-$\Max(R)$, $A_\p\rightarrow B_\p\rightarrow C_\p$ is exact. An $R$-homomorphism $f:M\rightarrow N$ is said to be a \emph{$w$-monomorphism} (respectively, \emph{$w$-epimorphism}, \emph{$w$-isomorphism}) if for any $ p\in w$-$\Max(R)$, $f_\p:M_\p\rightarrow N_\p$ is a monomorphism (respectively, an epimorphism, an isomorphism). Note that $f$ is a $w$-monomorphism (resp. $w$-epimorphism) if and only if $\Ker(f)$ (resp. $\Coker(f)$) is $\GV$-torsion. 
A class $\mathcal{C}$ of $R$-modules is said to be \emph{closed under $w$-isomorphisms} provided that for any $w$-isomorphism $f:M\rightarrow N$, if one of the modules $M$ and $N$ is in $\mathcal{C}$, so is the other.
One can show that a class $\mathcal{C}$ of $R$-modules is closed under $w$-isomorphisms if and only if any $R$-module $N$ which is $w$-isomorphic to a module in $\mathcal{C}$ is also in $\mathcal{C}$.

Let $M$ be an $R$-module. Then $M$ is said to be \emph{$w$-finite} (resp., \emph{$w$-principal}) if there exist a finitely generated (resp., rank one) free module $F$ and a $w$-epimorphism $g: F\rightarrow M$. The classes of $w$-finite modules are all closed under $w$-isomorphisms (\cite[Corollary 6.4.4]{fk16}). $M$ is called a \emph{$w$-Noetherian module} (resp., \emph{$w$-PIM})
if every submodule of $M$ is  $w$-finite (resp, \emph{$w$-principal}). A ring $R$ is called a \emph{$w$-Noetherian ring} (resp., $w$-PIR) if
$R$ as an $R$-module is a $w$-Noetherian module (resp., $w$-PIM).  The classes of $w$-Noetherian modules and $w$-PIMs  are all closed under $w$-isomorphisms(\cite[Corollary 6.8.3]{fk16}).

\section{basic properties of $(u$-$)S$-$w$-Noetherian modules and $(u$-$)S$-PIMs}
We begin with the notions of $S$-$w$-finite modules and $S$-$w$-principal modules.

\begin{definition}
	
Let $R$ be a ring, $S$ a multiplicative subset of $R$ and $M$ an $R$-module. Then $M$ is said to be $S$-$w$-finite $($resp., $S$-$w$-principal$)$ $($with respect to $s)$, if there exists an $s\in S$ and a finitely  $($resp., principally$)$ generated submodule $N$ of $M$ such that $s(M/N)$ is $\GV$-torsion.
\end{definition}

\begin{remark} The following statements hold.
\begin{enumerate}
\item  $S$-finite (resp., $S$-principal) modules and $w$-finite (resp., $w$-principal) modules are all $S$-$w$-finite (resp., $S$-$w$-principal).
\item A $\GV$-torsion-free module $M$ is $S$-$w$-finite ($S$-$w$-principal) (with respect to $s$) if and only if $sM\subseteq N_w$ for some finitely  (resp., principally) generated submodule $N$ of $M$.
\item  The classes of $S$-$w$-finite modules and  $S$-$w$-principal modules (with respect to $s$) are all closed under $w$-isomorphisms (see \cite[Corollary 6.4.4]{fk16} for classical case).
\end{enumerate}
\end{remark}

Recall from \cite{ad02} that an $R$-moduel $M$ is said to be an \emph{$S$-Noetherian module} (resp., \emph{$S$-principally ideal module} (or \emph{$S$-PIM})) if each submodule  of $M$ is $S$-finite (resp., $S$-principal); $M$ is said to be a \emph{$u$-$S$-Noetherian module} (resp., \emph{$u$-$S$-principally ideal module} (or \emph{$u$-$S$-PIM})) if there is $s\in S$ such that  each submodule  of $M$ is $S$-finite (resp., $S$-principal) with respect to $s\in S$. Now we introduce some generalizations of there concepts in terms of  $w$-operations.

\begin{definition} Let $R$ be a ring, $S$ a multiplicative subset of $R$ and $M$ an $R$-module. Then
\begin{enumerate}
\item  An $R$-module $M$ is said to be an $S$-$w$-Noetherian module $($resp., $S$-$w$-principally ideal module $($or $S$-$w$-PIM$))$ if each submodule  of $M$ is $S$-$w$-finite $($resp., $S$-$w$-principal$).$ 
\item $R$ is called an $S$-$w$-Noetherian ring $($resp., $S$-$w$-PIR$)$ if $R$ itself is an  $S$-$w$-Noetherian module $($resp., $S$-$w$-PIM$).$
\item 	 $M$ is called a uniformly $S$-$w$-Noetherian $($or $u$-$S$-$w$-Noetherian$)$ module $($resp., $u$-$S$-$w$-principally ideal module $($or $u$-$S$-$w$-PIM$))$ $($with respect to $s)$ provided there exists an element $s\in S$ such that each submodule of $M$ is $S$-$w$-finite $($resp., $S$-$w$-principal$)$ with respect to $s$. 
\item  $R$ is called $u$-$S$-$w$-Noetherian ring $($resp., $u$-$S$-$w$-PIR$)$  $($with respect to $s)$ if $R$ itself is a  $u$-$S$-$w$-Noetherian module $($resp., $u$-$S$-$w$-PIM$)$ $($with respect to $s)$.
	 	\end{enumerate}
\end{definition}

Clearly, all modules are $u$-$S$-$w$-Noetherian modules $($resp., $u$-$S$-$w$-PIMs$)$ if $0\in S$;  and ($u$-)$S$-$w$-Noetherian modules $($resp., ($u$-)$S$-$w$-PIMs$)$ are $w$-Noetherian modules $($resp., $w$-PIMs$)$ if $S$ is consist of units.

Let $S$ be a multiplicative subset of $R$. The saturation $S^{\ast}$ of $S$ is defined as $S^{\ast}=\{ s\in R \mid s_1=ss_2$ for some $s_1,s_2\in S\}$. A multiplicative subset  $S$ of $R$ is called \emph{ saturated} if $S=S^{\ast}$. Note that $S^{\ast}$ is always a saturated multiplicative subset containing $S$.

\begin{proposition}\label{consar}
	Let $R$ be a ring. Then the following statements hold.
	\begin{enumerate}
		\item Let $S \subseteq T$ be multiplicative subsets of $R$ and $M$ an $R$-module. If $M$ is a $(u$-$)S$-$w$-Noetherian module $($resp., $(u$-$)S$-$w$-PIM$),$ then $M$ is a $(u$-$)T$-$w$-Noetherian module $($resp., $(u$-$)T$-$w$-PIM$).$
		\item $M$ is  a $(u$-$)S$-$w$-Noetherian module $($resp., $(u$-$)S$-$w$-PIM$)$ if and only if $R$ is a $(u$-$)S^{\ast}$-$w$-Noetherian module $($resp., $(u$-$)S^{\ast}$-$w$-PIM$),$ where $S^{\ast}$ is the saturation of $S$.
	\end{enumerate}
\end{proposition}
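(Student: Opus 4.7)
The plan is to verify both parts directly from the definitions, exploiting two elementary facts: (a) passing to a larger multiplicative subset never shrinks the supply of admissible ``denominators'' $s$, and (b) the class of $\GV$-torsion modules is closed under passing to submodules.

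For part (1), I would let $N$ be an arbitrary submodule of $M$. By hypothesis there is some $s \in S$ and a finitely (resp.\ principally) generated submodule $K \subseteq N$ such that $s(N/K)$ is $\GV$-torsion. Since $S \subseteq T$, the same element $s \in T$ witnesses that $N$ is $T$-$w$-finite (resp., $T$-$w$-principal). The uniform versions are proved the same way: a single $s \in S$ that works for every submodule is still a single element of $T$ that works for every submodule.

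For part (2), the ``only if'' direction is just (1) applied to the inclusion $S \subseteq S^{\ast}$. For the ``if'' direction, assume $M$ is $S^{\ast}$-$w$-Noetherian (resp., $S^{\ast}$-$w$-PIM) and let $N \leq M$. Pick $t \in S^{\ast}$ and a finitely (resp., principally) generated $K \subseteq N$ with $t(N/K)$ $\GV$-torsion. By definition of $S^{\ast}$, there exist $s_1, s_2 \in S$ with $s_1 = t s_2$, whence
$$s_1 (N/K) \;=\; t s_2 (N/K) \;\subseteq\; t(N/K),$$
which is $\GV$-torsion. Therefore $s_1(N/K)$ is $\GV$-torsion (as a submodule of a $\GV$-torsion module), and $N$ is $S$-$w$-finite (resp., $S$-$w$-principal) with respect to $s_1 \in S$.

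For the uniform case of (2), the same computation applies with one additional bookkeeping step: starting from a single $t \in S^{\ast}$ that witnesses the uniform $S^{\ast}$-$w$-Noetherian property, the chosen $s_1 \in S$ depends only on $t$ (via a fixed decomposition $s_1 = t s_2$), so the same $s_1$ works uniformly for every submodule of $M$. There is no serious obstacle here; the only point requiring a moment of care is that a priori the decomposition $s_1 = t s_2$ produces $s_1$ and $s_2$ depending on $t$, so one must fix these once and for all before varying $N$, which is legitimate precisely because $t$ itself is fixed in the uniform setting.
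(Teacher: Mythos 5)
Your proposal is correct and follows essentially the same route as the paper: part (1) is immediate from the definitions, and for the nontrivial direction of part (2) you use the defining relation $s_1 = t s_2$ of the saturation and the closure of $\GV$-torsion modules under submodules, exactly as the paper does (the paper writes $s = t s_1$, but it is the same decomposition). Your explicit remark that in the uniform case the element $s_1$ can be fixed once and for all because $t$ is fixed is a useful clarification of a point the paper leaves implicit.
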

\begin{proof} We only prove them for the uniform $S$-version since the $S$-version can be proved similarly.
	
$(1)$: Obvious.
	
$(2)$: Suppose $M$ is a $u$-$S^{\ast}$-$w$-Noetherian module. Then there is a $t\in S^{\ast}$ such that for any submodule $N$ of $M$, there is a finitely generated (resp., principally generated) submodule $K$ of $N$ satisfying $t(N/K)$ is $\GV$-torsion. Assume $s=ts_1$ with $s,s_1\in S$. Then $s(N/K)=ts_1(N/K)\subseteq t(N/K)$ is $\GV$-torsion. So $N$ is $w$-finite (resp., $w$-principal) with respect to $s$. Hence $M$ is a $u$-$S$-$w$-Noetherian module. 
\end{proof}

Recall from \cite{z21}, an $R$-sequence  $M\xrightarrow{f} N\xrightarrow{g} L$ is called  \emph{$u$-$S$-exact} provided that there is an element $s\in S$ such that $s\Ker(g)\subseteq \Im(f)$ and $s\Im(f)\subseteq \Ker(g)$. An $R$-homomorphism $f:M\rightarrow N$ is an \emph{$u$-$S$-monomorphism}  $($resp.,   \emph{$u$-$S$-epimorphism}, \emph{$u$-$S$-isomorphism}$)$ provided $0\rightarrow M\xrightarrow{f} N$   $($resp., $M\xrightarrow{f} N\rightarrow 0$, $0\rightarrow M\xrightarrow{f} N\rightarrow 0$ $)$ is   $u$-$S$-exact. It is easy to verify that an  $R$-homomorphism $f:M\rightarrow N$ is a $u$-$S$-monomorphism $($resp., $u$-$S$-epimorphism$)$ if and only if  $\Ker(f)$ $($resp., $\Coker(f))$ is a  $u$-$S$-torsion module. Different with $w$-isomorphisms, every $u$-$S$-isomorphism has an ``inverse'' mapping, i.e., an $R$-homomorphism $f:M\rightarrow N$ is a $u$-$S$-isomorphism if and only if there is a $u$-$S$-isomorphism $g:N\rightarrow M$ such that $fg=s\Id=gf$ (\cite[Proposition 1.1]{z23}).

Next, we will investigate the transfer of $u$-$S$-$w$-Noetherian properties under the two kinds of generalized short exact sequences.

\begin{lemma}\label{s-exct-tor}\cite[Proposition 2.8]{z21}
	Let $R$ be a ring and $S$ a multiplicative subset of $R$.  Let $0\rightarrow A\xrightarrow{f} B\xrightarrow{g} C\rightarrow 0$  be a $u$-$S$-exact sequence. Then  $B$ is  $u$-$S$-torsion if and only if $A$ and $C$ are $u$-$S$-torsion.
\end{lemma}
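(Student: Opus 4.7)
The plan is to unpack the three relevant consequences of $u$-$S$-exactness at the positions $A$, $B$, and $C$, and then perform two short element chases, one in each direction, while bookkeeping the multipliers from $S$ (using that $S$ is closed under products).

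For the forward implication, suppose $s_1 B = 0$ for some $s_1 \in S$. For $A$: the $u$-$S$-exactness at $A$ gives $s_2 \in S$ with $s_2 \Ker(f) = 0$. For any $a \in A$, $f(s_1 a) = s_1 f(a) = 0$ puts $s_1 a \in \Ker(f)$, so $s_2 s_1 a = 0$; hence $s_1 s_2 A = 0$. For $C$: the $u$-$S$-exactness at $C$ gives $s_3 \in S$ with $s_3 \Coker(g) = 0$, so every $c \in C$ satisfies $s_3 c = g(b)$ for some $b \in B$; then $s_1 s_3 c = g(s_1 b) = g(0) = 0$, i.e.\ $s_1 s_3 C = 0$.

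For the reverse implication, let $s_A, s_C \in S$ kill $A$ and $C$ respectively, and let $s \in S$ be the multiplier witnessing $u$-$S$-exactness at $B$, so in particular $s \Ker(g) \subseteq \Im(f)$. Pick any $b \in B$. Since $s_C g(b) = 0$ in $C$, we have $s_C b \in \Ker(g)$, hence $s \, s_C b \in \Im(f)$, say $s\, s_C b = f(a)$ for some $a \in A$. Then $s_A s\, s_C b = f(s_A a) = f(0) = 0$. Thus $s_A s\, s_C \in S$ annihilates $B$.

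The main (and only) obstacle is keeping track of multipliers and invoking the right piece of $u$-$S$-exactness at each of the three positions; no clever construction is needed, because $S$ is multiplicatively closed so all the partial annihilators produced by the three separate $u$-$S$-exactness conditions can be combined into a single element of $S$. The argument is essentially the standard five-lemma-style diagram chase adapted to the $u$-$S$ setting.
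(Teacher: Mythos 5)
Your diagram chase is correct: you correctly unpack the three local $u$-$S$-exactness conditions (at $A$ giving $s_2\Ker(f)=0$, at $B$ giving $s\Ker(g)\subseteq\Im(f)$, at $C$ giving $s_3 C\subseteq\Im(g)$), and combine the partial annihilators by multiplying in $S$. Note, however, that the paper itself does not prove this lemma — it is quoted verbatim from \cite[Proposition 2.8]{z21} — so there is no internal proof to compare against; your argument is the standard one that the cited source would give.
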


\begin{lemma}\label{s-exct-diag}\cite[Lemma 2.11]{qkwcz22}
	Let $R$ be a ring and $S$ a multiplicative subset of $R$.  Let
	$$\xymatrix@R=20pt@C=25pt{
		0 \ar[r]^{}&A_1\ar@{^{(}->}[d]^{i_A}\ar[r]& B_1 \ar[r]^{\pi_1}\ar@{^{(}->}[d]^{i_B}&C_1\ar[r] \ar@{^{(}->}[d]^{i_C} &0\\
		0 \ar[r]^{}&A_2\ar[r]&B_2 \ar[r]^{\pi_2}&C_2\ar[r] &0\\}$$
	be a commutative diagram with exact rows, where $i_A, i_B$ and $i_C$ are embedding maps. Suppose $s_A A_2\subseteq A_1$ and $s_C C_2\subseteq C_1$ for some $s_A\in S, s_C\in S$. Then $s_As_CB_2\subseteq B_1$.
\end{lemma}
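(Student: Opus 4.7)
The plan is a direct diagram chase. Because the two rows are short exact, the maps $A_k\to B_k$ are injective; together with the hypothesis that $i_A,i_B,i_C$ are embeddings this lets me identify $A_1$, $A_2$, $B_1$, $C_1$ with submodules of $B_2$ or $C_2$ as appropriate, with all identifications compatible via the two commuting squares. No ingredient beyond exactness of the rows, surjectivity of $\pi_1$, and the two hypotheses $s_AA_2\subseteq A_1$, $s_CC_2\subseteq C_1$ will be needed.

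Fix an arbitrary $b_2\in B_2$. First I use $s_CC_2\subseteq C_1$ to write $s_C\pi_2(b_2)=i_C(c_1)$ for some $c_1\in C_1$, and surjectivity of $\pi_1$ produces $b_1\in B_1$ with $\pi_1(b_1)=c_1$. Commutativity of the right square gives $\pi_2(i_B(b_1))=i_C(\pi_1(b_1))=i_C(c_1)=s_C\pi_2(b_2)$, so $s_Cb_2-i_B(b_1)\in\ker\pi_2$; by exactness of the bottom row there is $a_2\in A_2$ with $s_Cb_2-i_B(b_1)=\iota_2(a_2)$, where $\iota_2\colon A_2\hookrightarrow B_2$ is the inclusion. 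Now $s_AA_2\subseteq A_1$ yields $s_Aa_2=i_A(a_1)$ for some $a_1\in A_1$, and commutativity of the left square delivers $\iota_2(s_Aa_2)=i_B(\iota_1(a_1))\in i_B(B_1)$. Multiplying the identity $s_Cb_2-i_B(b_1)=\iota_2(a_2)$ by $s_A$ then gives
\[
s_As_Cb_2 \;=\; s_A i_B(b_1)+\iota_2(s_Aa_2) \;=\; i_B\bigl(s_Ab_1+\iota_1(a_1)\bigr) \;\in\; i_B(B_1),
\]
which, after identifying $B_1$ with $i_B(B_1)\subseteq B_2$, is exactly $s_As_Cb_2\in B_1$.

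I do not expect a serious obstacle here: the only point that requires care is the bookkeeping of embeddings, namely verifying that $A_1$ pushed into $B_2$ through $\iota_2\circ i_A$ lands inside $i_B(B_1)$, which is precisely the content of the left commuting square. Once the identifications $A_1\hookrightarrow A_2\hookrightarrow B_2$, $A_1\hookrightarrow B_1\hookrightarrow B_2$, and $C_1\hookrightarrow C_2$ are set up, the whole argument collapses to the one-line chase $s_As_Cb_2=s_A(s_Cb_2-b_1)+s_Ab_1\in s_AA_1+s_AB_1\subseteq B_1$, which is essentially the short five lemma adapted to the $u$-$S$ setting.
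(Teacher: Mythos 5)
Your diagram chase is correct and complete: from $s_C\pi_2(b_2)\in C_1$ you lift through $\pi_1$, land the defect in $\ker\pi_2=\operatorname{Im}(A_2\to B_2)$, apply $s_AA_2\subseteq A_1$, and push back into $i_B(B_1)$ via the left square — there is no gap. Note that the paper itself does not prove this lemma but cites it from \cite[Lemma 2.11]{qkwcz22}; your argument is the standard (and essentially the only reasonable) chase for this statement, so it matches what the reference does.
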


\begin{proposition} \label{s-u-noe-exact}
	Let $R$ be a ring and $S$ a multiplicative subset of $R$. Let $0\rightarrow A\rightarrow B\rightarrow C\rightarrow 0$ be an exact sequence of $w$-modules. Then $B$ is $(u$-$)S$-$w$-Noetherian if and only if $A$ and $C$ are $(u$-$)S$-$w$-Noetherian.
\end{proposition}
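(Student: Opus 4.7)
My plan is to prove the two directions separately, running a single argument in each that handles both the $S$-version and the uniform $u$-$S$-version by carefully tracking the witnesses.

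For the forward direction, since every submodule of $A$ is already a submodule of $B$, the $S$-$w$-finite witnesses guaranteed by $B$ apply directly to $A$. For a submodule $M \subseteq C$, I will pull back along the surjection $\pi\colon B\to C$ to $\pi^{-1}(M)\subseteq B$, take a witness $(K,s)$ there, and push forward: $\pi(K)$ is a finitely generated submodule of $M$, and the induced surjection $\pi^{-1}(M)/K \twoheadrightarrow M/\pi(K)$ transports the property that $s$ times the quotient is $\GV$-torsion, because $\GV$-torsion is preserved under quotients.

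For the backward direction, given $N\subseteq B$ the key is the short exact sequence
$$0 \to N\cap A \to N \to (N+A)/A \to 0,$$
whose outer terms lie in $A$ and $C$ respectively. Using the $S$-$w$-Noetherian hypotheses on $A$ and $C$, there exist a finitely generated $L\subseteq N\cap A$ and $s_A\in S$ with $s_A\cdot(N\cap A)/L$ $\GV$-torsion, together with $K'\subseteq N+A$ satisfying $A\subseteq K'$, $K'/A$ finitely generated, and $s_C\cdot(N+A)/K'$ $\GV$-torsion for some $s_C\in S$. I will lift finitely many generators of $K'/A$ into $N$ to obtain a finitely generated $K_0\subseteq N$ with $K' = A + K_0$. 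Setting $P = L + K_0\subseteq N$, the modular law produces the exact sequence
$$0 \to (N\cap A)/(N\cap A\cap P) \to N/P \to (N+A)/K' \to 0,$$
whose left term is a quotient of $(N\cap A)/L$ (since $L\subseteq N\cap A\cap P$) and whose right term is exactly $(N+A)/K'$.

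The concluding step is the standard fact that $\GV$-torsion modules form an extension-closed class: combined with closure of $\GV$-ideals under products and their finite generation, this yields the lemma that in any exact sequence $0\to X\to Y\to Z\to 0$ with $sX$ and $tZ$ both $\GV$-torsion, $stY$ is $\GV$-torsion. Applying this to the sequence above makes $s_As_C\cdot(N/P)$ $\GV$-torsion, so $N$ is $S$-$w$-finite with witness $s_As_C$; in the uniform case $s_A$ and $s_C$ do not depend on $N$, proving $B$ is $u$-$S$-$w$-Noetherian with respect to $s_As_C$. The main obstacle is the careful modular-law bookkeeping that expresses $N/P$ as an extension controlled by the two given hypotheses, together with the extension-closure step that propagates $\GV$-torsion through the sequence while multiplying the witnesses.
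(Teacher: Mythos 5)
Your proof is correct and follows the same overall strategy as the paper: the hard direction decomposes a submodule $N\subseteq B$ via $0\to N\cap A\to N\to (N+A)/A\to 0$, extracts finitely generated witnesses $(L,s_A)$ and $(K',s_C)$ from the $A$- and $C$-sides, lifts generators to build a finitely generated $P\subseteq N$, and multiplies to obtain the combined witness $s_As_C$. Where you diverge is the final propagation step. The paper invokes the diagram lemma (Lemma~\ref{s-exct-diag}, from \cite{qkwcz22}) to conclude $s_1s_2B_i\subseteq N_i$; as stated that lemma requires genuine containments $s_1(A\cap B_i)\subseteq K_i$ and $s_2((B_i+A)/A)\subseteq L_i$, whereas the hypotheses only furnish the $w$-weakened versions $s_1(A\cap B_i)\subseteq(K_i)_w$ and $s_2((B_i+A)/A)\subseteq(L_i)_w$, so a small adjustment is implicitly needed there. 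Your route — the modular-law exact sequence $0\to (N\cap A)/\big((N\cap A)\cap P\big)\to N/P\to (N+A)/K'\to 0$, with the left term a quotient of $(N\cap A)/L$, followed by the observation that if $sX$ and $tZ$ are $\GV$-torsion in $0\to X\to Y\to Z\to 0$ then $stY$ is $\GV$-torsion (using finite generation and multiplicativity of $\GV$-ideals) — lands directly on $s_As_C\cdot(N/P)$ being $\GV$-torsion, sidestepping that subtlety. It is a more explicit and more careful rendering of the same argument, and it transparently yields both the $S$- and the uniform $S$-versions by tracking whether the witnesses depend on $N$.
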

\begin{proof} We only prove them for the uniform $S$-version since the $S$-version can be proved similarly.
	
It is easy to verify that if $B$ is $u$-$S$-$w$-Noetherian, so are $A$ and $C$. Suppose $A$ and $C$ are $u$-$S$-$w$-Noetherian. Let $\{B_i\}_{i\in \Lambda}$ be the set of all $w$-submodules of $B$. Since $A$ is $u$-$S$-$w$-Noetherian, there exists an element $s_1\in S$ such that  $s_1(A\cap B_i)\subseteq (K_i)_w\subseteq A\cap B_i$ for some finitely generated $R$-module $K_i$ and any $i\in \Lambda$.  Since $C$ is $u$-$S$-$w$-Noetherian, there also exists an element $s_2\in S$ such that $s_2((B_i+A)/A)\subseteq (L_i)_w\subseteq ((B_i+A)/A)_w$ for some finitely generated $R$-module $L_i$ and any $i\in \Lambda$ . Let $N_i$ be the finitely generated submodule of $B_i$ generated by the finite generators of $K_i$ and finite pre-images of generators of  $L_i$.  Consider the following natural commutative diagram with exact rows: $$\xymatrix@R=20pt@C=25pt{
		0 \ar[r]^{}&K_i\ar@{^{(}->}[d]\ar[r]&N_i \ar[r]\ar@{^{(}->}[d]&L_i\ar[r] \ar@{^{(}->}[d] &0\\
		0 \ar[r]^{}&A\cap B_i\ar[r]&B_i \ar[r]&(B_i+A)/A \ar[r] &0.\\}$$
	Set $s=s_1s_2\in S$. We have  $sB_i\subseteq N_i\subseteq B_i$ by Lemma \ref{s-exct-diag}. Hence 
	$sB_i\subseteq (N_i)_w\subseteq B_i$. It follows that  $B$ is $u$-$S$-$w$-Noetherian with respect to $s$.
\end{proof}

\begin{corollary} \label{s-u-noe-exact-m}
	Let $R$ be a ring and $S$ a multiplicative subset of $R$. If $R$ is a $(u$-$)S$-$w$-Noetherian ring, then any finitely generated $R$-module is  $(u$-$)S$-$w$-Noetherian.
\end{corollary}
\begin{proof} Suppose $M$ is a finitely generated $R$-module. Then there is an exact sequence $0\rightarrow K\rightarrow F\rightarrow M\rightarrow 0$ where $F$ is finitely generated free $R$-module. Using the induction on the rank of $F$, one can show $F$ is $(u$-$)S$-$w$-Noetherian by Proposition \ref{s-u-noe-exact}. So $M$ is also $(u$-$)S$-$w$-Noetherian by Proposition \ref{s-u-noe-exact} again.
\end{proof}

\begin{proposition} \label{s-u-noe-s-exact}
	Let $R$ be a ring and $S$ a multiplicative subset of $R$. Let $0\rightarrow A\rightarrow B\rightarrow C\rightarrow 0$ be a $u$-$S$-exact sequence of $w$-modules. Then $B$ is $(u$-$)S$-$w$-Noetherian if and only if $A$ and $C$ are $(u$-$)S$-$w$-Noetherian
\end{proposition}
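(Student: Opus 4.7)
My plan is to extend the diagram chase from the proof of Proposition~\ref{s-u-noe-exact} to the $u$-$S$-exact setting, absorbing into the uniform element $s\in S$ both (i) an annihilator $t\in S$ of $\Ker(f)$ and $\Coker(g)$, and (ii) the $u$-$S$-exactness constant $s_0\in S$ at $B$, which by definition satisfies $s_0\Ker(g)\subseteq\Im(f)$ and $s_0\Im(f)\subseteq\Ker(g)$.

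For the direction ``$B$ implies $A$ and $C$'', suppose $B$ is $u$-$S$-$w$-Noetherian with respect to $s$. For a submodule $A_0\subseteq A$, I apply $S$-$w$-finiteness to $f(A_0)\subseteq B$ to obtain a finitely generated $L\subseteq f(A_0)$, lift a finite generating set arbitrarily to $K\subseteq A_0$, and use $t\Ker(f)=0$ to transfer the $\GV$-torsion from $f(A_0)/L=f(A_0)/f(K)$ to $A_0/K$ at the cost of an extra factor $t$. For a submodule $C_0\subseteq C$, I apply $S$-$w$-finiteness to $g^{-1}(C_0)\subseteq B$, push the finite generating set forward through $g$ into $C_0\cap\Im(g)$, and use $tC_0\subseteq\Im(g)$ to absorb the gap between $g(g^{-1}(C_0))$ and $C_0$. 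Both $A$ and $C$ then come out $u$-$S$-$w$-Noetherian with respect to $ts$.

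For the harder direction, assume $A$ (resp.\ $C$) is $u$-$S$-$w$-Noetherian with respect to $s_A$ (resp.\ $s_C$). Given a $w$-submodule $N\subseteq B$, set $A':=f^{-1}(N)\subseteq A$ and $C':=g(N)\subseteq C$, pick finitely generated $K_A\subseteq A'$ and $K_C\subseteq C'$ with $s_A(A'/K_A)$ and $s_C(C'/K_C)$ $\GV$-torsion, and let $N_0\subseteq N$ be the finitely generated submodule generated by $f(K_A)$ together with chosen preimages in $N$ of a finite generating set of $K_C$. For $n\in N$, the $\GV$-torsion of $s_C\overline{g(n)}\in C'/K_C$ yields $J\in\GV(R)$ with $Js_Cg(n)\subseteq g(N_0)$, hence $Js_Cn\subseteq N_0+(\Ker(g)\cap N)$. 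Applying $s_0$ drops $\Ker(g)\cap N$ into $\Im(f)\cap N=f(A')$, and applying $s_A$, together with $f(K_A)\subseteq N_0$, drops $f(A')$ into $N_0$ modulo $\GV$-torsion. The outcome is that $Js_0s_As_C\bar n$ is $\GV$-torsion in $N/N_0$ for every $n\in N$.

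The main obstacle is upgrading ``$Jy$ is $\GV$-torsion'' to ``$y$ is $\GV$-torsion'' for $y=s_0s_As_C\bar n$, which is handled by the elementary fact that if $J=\langle j_1,\dots,j_m\rangle\in\GV(R)$ and $K_i\in\GV(R)$ annihilates $j_iy$ for each $i$, then the product $JK_1\cdots K_m\in\GV(R)$ annihilates $y$. The remaining passage from $w$-submodules to arbitrary submodules is routine (replace $N$ by $N_w\subseteq B$, find a finitely generated $K\subseteq N_w$ with $s(N_w/K)$ $\GV$-torsion, and multiply through by a common $\GV$-ideal so that the generators land in $N$), and the non-uniform $S$-version follows by running the same chase with pointwise, rather than uniform, choices of the constants $s_A$, $s_C$, $s_0$, and $t$.
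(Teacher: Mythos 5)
Your proof is correct but takes a genuinely different route from the paper's. The paper does not re-run a diagram chase; it reduces to Proposition~\ref{s-u-noe-exact} by factoring the $u$-$S$-exact sequence through the genuine short exact sequences $0\to\Ker(g)\to B\to\Im(g)\to 0$, $0\to\Im(g)\to C\to\Coker(g)\to 0$, and $0\to\Ker(f)\to A\to\Im(f)\to 0$, observing that $\Ker(f)$ and $\Coker(g)$ are $u$-$S$-torsion (hence trivially $u$-$S$-$w$-Noetherian) and that $\Im(f)$ and $\Ker(g)$ become $u$-$S$-$w$-Noetherian simultaneously because $s_0\Im(f)\subseteq\Ker(g)$ and $s_0\Ker(g)\subseteq\Im(f)$ with $u$-$S$-torsion quotients. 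You instead re-run the chase from Proposition~\ref{s-u-noe-exact} directly, with $A'=f^{-1}(N)$ and $C'=g(N)$ playing the roles of $A\cap B_i$ and $(B_i+A)/A$, and absorb the $u$-$S$-exactness constants $s_0,t$ into the final uniform $s$; the lemma you use to upgrade ``$Jy$ is $\GV$-torsion'' to ``$y$ is $\GV$-torsion'' (products of $\GV$-ideals are $\GV$-ideals) is standard and correctly applied. Your approach is longer but more self-contained, and it sidesteps a point the paper glosses over: some of the modules in the paper's auxiliary exact sequences (such as $\Im(g)$ and $\Ker(g)/s\Im(f)$) need not be $w$-modules, so Proposition~\ref{s-u-noe-exact} as literally stated does not apply to them; one must pass to $w$-envelopes and use closure under $w$-isomorphism, a closure property the paper only records after this proposition. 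Conversely, the paper's route is shorter and modular. One small remark: your chase as written already works for an arbitrary submodule $N\subseteq B$, so the final reduction from $w$-submodules to arbitrary submodules via $N_w$ is not actually needed; and for the non-uniform version only $s_A$ and $s_C$ are chosen pointwise, while $s_0$ and $t$ are fixed by the hypothesis of $u$-$S$-exactness.
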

\begin{proof}
	We only prove them for the uniform $S$-version since the $S$-version can be proved similarly.
	
  Let $0\rightarrow A\xrightarrow{f} B\xrightarrow{g} C\rightarrow 0$ be a $u$-$S$-exact sequence. Then there exists an element $s\in S$ such that  $ s\Ker(g)\subseteq  \Im(f)$ and $ s\Im(f)\subseteq  \Ker(g)$. Note that $\Im(f)/s\Ker(g)$ and $\Ker(g)/s\Im(f)$ are $u$-$S$-torsion. If $\Im(f)$ is $u$-$S$-$w$-Noetherian, then the submodule $s\Im(f)$ of $\Im(f)$ is $u$-$S$-$w$-Noetherian. Thus $\Ker(g)$ is $u$-$S$-$w$-Noetherian by Proposition \ref{s-u-noe-exact}. Similarly, if  $\Ker(g)$ is $u$-$S$-$w$-Noetherian, then $\Im(f)$ is $u$-$S$-Noetherian. Consider the following three exact sequences:
	$0\rightarrow\Ker(g) \rightarrow  B\rightarrow \Im(g)\rightarrow 0,\quad 0\rightarrow\Im(g) \rightarrow  C\rightarrow \Coker(g)\rightarrow 0,$ and $0\rightarrow\Ker(f) \rightarrow  A\rightarrow \Im(f)\rightarrow 0$
	with $\Ker(f)$ and $\Coker(g)$ $u$-$S$-torsion.   It is easy to verify that $B$ is $u$-$S$-$w$-Noetherian if and only if $A$ and $C$ are $u$-$S$-$w$-Noetherian by Proposition \ref{s-u-noe-exact}.
\end{proof}

\begin{corollary} \label{s-u-noe-u-iso}
	Let $R$ be a ring, $S$ a multiplicative subset of $R$ and $ M\xrightarrow{f} N$  a $u$-$S$-isomorphism. If one of $M$ and $N$ is $(u$-$)S$-$w$-Noetherian, so is the other.
\end{corollary}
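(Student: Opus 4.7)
The plan is to reduce the statement to Proposition \ref{s-u-noe-s-exact} by viewing the $u$-$S$-isomorphism $f:M\to N$ as the middle map of a short $u$-$S$-exact sequence. Unwinding the definition, the four-term sequence
$$0\longrightarrow M\xrightarrow{f} N\longrightarrow 0\longrightarrow 0$$
is $u$-$S$-exact at $M$ precisely when $\Ker(f)$ is $u$-$S$-torsion (i.e.\ $f$ is a $u$-$S$-monomorphism) and at $N$ precisely when $\Coker(f)=N/\Im(f)$ is $u$-$S$-torsion (i.e.\ $f$ is a $u$-$S$-epimorphism), while the condition at the trailing $0$ is vacuous. Hence $f$ being a $u$-$S$-isomorphism is equivalent to this four-term sequence being $u$-$S$-exact. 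Applying Proposition \ref{s-u-noe-s-exact} with $A=M$, $B=N$, $C=0$ and noting that the zero module is trivially both $S$-$w$-Noetherian and $u$-$S$-$w$-Noetherian then delivers the equivalence in both the uniform and non-uniform versions simultaneously.

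If one wishes to avoid the tacit $w$-module hypothesis of Proposition \ref{s-u-noe-s-exact}, the conclusion can be obtained directly using the inverse characterization quoted just before Lemma \ref{s-exct-tor}: there exist an $R$-homomorphism $g:N\to M$ and an element $s\in S$ with $fg=s\,\Id_N$ and $gf=s\,\Id_M$. Suppose $M$ is $u$-$S$-$w$-Noetherian with respect to some $s_1\in S$. Given a submodule $N'\subseteq N$, the submodule $g(N')\subseteq M$ admits a finitely generated $K$ with $s_1(g(N')/K)$ being $\GV$-torsion. Lift a finite generating set of $K$ through $g$ to $N'$: this produces a finitely generated $L\subseteq N'$ with $g(L)=K$, so that $f(K)=fg(L)=sL$. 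For any $n\in N'$ the coset $s_1 g(n)+K\in g(N')/K$ is $\GV$-torsion, annihilated by some $J\in\GV(R)$; applying $f$ yields $J\cdot ss_1 n=f(Js_1g(n))\subseteq f(K)=sL\subseteq L$, so $ss_1(N'/L)$ is $\GV$-torsion. Thus $N$ is $u$-$S$-$w$-Noetherian with respect to $ss_1$, independently of $N'$. The converse implication follows by symmetry since $g$ is itself a $u$-$S$-isomorphism, and the non-uniform $S$-case is the same argument with $s_1$ allowed to depend on the chosen $N'$.

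The main (mild) obstacle is purely bookkeeping: one must track the two constants $s$ (from the inverse of $f$) and $s_1$ (from the $u$-$S$-$w$-Noetherian hypothesis on $M$) and verify that their product $ss_1\in S$ serves uniformly for every submodule $N'\subseteq N$. Everything else is formal, either from Proposition \ref{s-u-noe-s-exact} or from the inverse characterization of $u$-$S$-isomorphisms.
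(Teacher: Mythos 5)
Your first paragraph is exactly what the paper does: it treats $0\to M\xrightarrow{f} N\to 0\to 0$ as a $u$-$S$-exact sequence and appeals to the proposition on $u$-$S$-exact sequences (the paper in fact writes Proposition~\ref{s-u-noe-exact}, which is surely a typo for Proposition~\ref{s-u-noe-s-exact}). You have correctly noticed that this reduction is not quite airtight as written, because both of those propositions carry the hypothesis that $A,B,C$ are $w$-modules, whereas Corollary~\ref{s-u-noe-u-iso} makes no such assumption on $M$ and $N$; the $w$-exact version without a $w$-module hypothesis appears only \emph{after} this corollary in the paper's ordering.

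Your second, direct argument is a genuine and cleaner route. It uses the inverse characterization of $u$-$S$-isomorphisms ($g:N\to M$ with $fg=s\,\Id_N$, $gf=s\,\Id_M$), transports a submodule $N'\subseteq N$ to $g(N')\subseteq M$, applies the hypothesis there to get a finitely generated $K\subseteq g(N')$ with $s_1(g(N')/K)$ $\GV$-torsion, lifts the generators of $K$ through $g$ to a finitely generated $L\subseteq N'$ with $g(L)=K$ and $f(K)=sL$, and then for $n\in N'$ pushes the containment $Js_1 g(n)\subseteq K$ forward by $f$ to get $Jss_1 n\subseteq sL\subseteq L$. This shows $ss_1(N'/L)$ is $\GV$-torsion, with a single constant $ss_1$ working for every $N'$, and the converse follows by symmetry since $g$ is itself a $u$-$S$-isomorphism. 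The bookkeeping is correct. What this buys, beyond closing the $w$-module gap, is an argument that is completely self-contained and does not thread through the commutative-diagram lemma used to prove Proposition~\ref{s-u-noe-exact}; what the paper's route buys is brevity and uniformity with the surrounding corollaries. Both are sound modulo the ordering/typo issue you flagged.
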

\begin{proof} It follows from Proposition \ref{s-u-noe-s-exact} since $0\rightarrow M\xrightarrow{f} N\rightarrow 0\rightarrow 0$ is a $u$-$S$-exact sequence.
\end{proof}

\begin{proposition} \label{s-w-noe-s-exact}
	Let $R$ be a ring and $S$ a multiplicative subset of $R$. Let $0\rightarrow A\rightarrow B\rightarrow C\rightarrow 0$ be a $w$-exact sequence of $R$-modules. Then $B$ is $(u$-$)S$-$w$-Noetherian if and only if $A$ and $C$ are $(u$-$)S$-$w$-Noetherian
\end{proposition}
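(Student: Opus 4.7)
The plan is to reduce the $w$-exact case to the ordinary exact case of Proposition \ref{s-u-noe-exact}. I would first establish two preliminary facts.

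The first preliminary is that the class of $(u$-$)S$-$w$-Noetherian modules is closed under $w$-isomorphisms. Given a $w$-isomorphism $\phi\colon M\to N$ and a submodule $M'\subseteq M$, the restriction $\phi|_{M'}\colon M'\twoheadrightarrow\phi(M')$ has kernel contained in $\Ker\phi$, which is $\GV$-torsion, so $\phi|_{M'}$ is itself a $w$-isomorphism. By Remark 2.2(3), $S$-$w$-finiteness with the same modulus $s$ transfers between $w$-isomorphic modules, so $M'$ and $\phi(M')$ are simultaneously $S$-$w$-finite. The symmetric argument pulling back submodules of $N$ along $\phi$ completes the closure. As consequences, $M$ is $(u$-$)S$-$w$-Noetherian iff $M_w$ is, and both submodules and quotients of $(u$-$)S$-$w$-Noetherian modules inherit the property.

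The second preliminary identifies $A$ with $\Ker g$ and $C$ with $\Im g$ up to $w$-isomorphism. The map $f\colon A\twoheadrightarrow\Im f$ is a $w$-isomorphism since $\Ker f$ is $\GV$-torsion. Setting $I=\Im f\cap\Ker g$, the cokernels $\Im f/I$ and $\Ker g/I$ both localize to zero at any $p\in w\text{-}\Max(R)$---indeed, $w$-exactness gives $\Im f_p=\Ker g_p$, hence $I_p=\Im f_p=\Ker g_p$---so they are $\GV$-torsion. The resulting zigzag $A\to\Im f\hookleftarrow I\hookrightarrow\Ker g$ of $w$-isomorphisms, combined with the first preliminary, shows that $A$ and $\Ker g$ share the same $(u$-$)S$-$w$-Noetherian status. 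Similarly, $\Im g\hookrightarrow C$ has $\GV$-torsion cokernel $\Coker g$, so $C$ and $\Im g$ share the same status.

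With both preliminaries in hand, the $(u$-$)S$-$w$-Noetherian status of the outer terms is preserved when the $w$-exact sequence is replaced by the honest exact sequence $0\to\Ker g\to B\to\Im g\to 0$, which is amenable to Proposition \ref{s-u-noe-exact}. The main obstacle is that $\Ker g$, $B$, $\Im g$ are not a priori $w$-modules, strictly outside the hypotheses of Proposition \ref{s-u-noe-exact}. I would address this by first applying the $w$-envelope functor to the original sequence---each map $M\to M_w$ is a $w$-isomorphism by the first preliminary---obtaining a $w$-exact sequence $0\to A_w\to B_w\to C_w\to 0$ of $w$-modules, and then executing the second-preliminary replacement inside this new sequence, where $\Ker(B_w\to C_w)$ is a $w$-submodule of $B_w$ and Proposition \ref{s-u-noe-exact} applies (after at most one additional passage to a $w$-envelope if the image term requires it).
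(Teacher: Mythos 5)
Your overall route is sound and is essentially the one the paper gestures at (the proof in the paper is only a pointer to Proposition \ref{s-u-noe-s-exact} and to \cite[Theorem 6.8.2]{fk16}), but there is one real gap in the last step.

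Your first preliminary is correct and is actually proved directly rather than deduced: you establish closure of the $(u$-$)S$-$w$-Noetherian class under $w$-isomorphisms by restricting/pulling back submodules and invoking Remark 2.2(3) for $S$-$w$-finiteness. Note, though, that the paper lists this fact as an unlabelled corollary \emph{of} Proposition \ref{s-w-noe-s-exact}, so your argument reverses the logical order; this is perfectly acceptable, since your direct proof uses only Remark 2.2(3) and does not rely on the proposition being proved. Your second preliminary, with $I=\Im f\cap\Ker g$ and the localization computation $I_p=(\Im f)_p=(\Ker g)_p$ at $w$-maximal primes, is correct and is exactly the kind of reduction the paper's reference to Proposition \ref{s-u-noe-s-exact} is meant to suggest.

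The gap is in the final paragraph. After passing to $w$-envelopes, you obtain the honest exact sequence $0\to\Ker g'\to B_w\to\Im g'\to 0$ in which $\Ker g'$ and $B_w$ are $w$-modules (the kernel of a map from a $w$-module to a $\GV$-torsion-free module is always $w$-closed), but $\Im g'$ need not be a $w$-module. Your proposed fix of ``one additional passage to a $w$-envelope'' does not resolve this: replacing $\Im g'$ by $(\Im g')_w$ destroys surjectivity of $B_w\to\Im g'$, leaving you once more with only a $w$-exact sequence, which is the situation you started from. The correct patch is to observe that the proof of Proposition \ref{s-u-noe-exact} never uses that $C$ is a $w$-module; it only uses that $A$ is a $w$-submodule of the $w$-module $B$, so that $A\cap B_i$ is $w$-closed and $B/A\cong C$ is $\GV$-torsion-free (the argument for $C$ works with $((B_i+A)/A)_w$, not $(B_i+A)/A$, so no $w$-closedness of $C$ is needed). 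With that strengthening of Proposition \ref{s-u-noe-exact} made explicit, your reduction goes through. Alternatively, you could bypass Proposition \ref{s-u-noe-exact} entirely in this last step and rerun the Lemma \ref{s-exct-diag} diagram argument directly on $0\to\Ker g'\to B_w\to\Im g'\to 0$.
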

\begin{proof}
	The proof is similar with that of Proposition \ref{s-u-noe-s-exact} and \cite[Theorem 6.8.2]{fk16}.	
\end{proof}

Similarly, we have the following corollaries:
\begin{corollary} 
Let $R$ be a ring and $S$ a multiplicative subset of $R$. Let $0\rightarrow A\rightarrow B\rightarrow C\rightarrow 0$ be a $u$-$S$-exact sequence of $R$-modules. Then $B$ is $(u$-$)S$-$w$-Noetherian if and only if $A$ and $C$ are $(u$-$)S$-$w$-Noetherian
\end{corollary}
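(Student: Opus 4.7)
The plan is to mirror the proof of Proposition \ref{s-u-noe-s-exact} verbatim, but to replace every invocation of Proposition \ref{s-u-noe-exact} (which required all three terms of the short exact sequence to be $w$-modules) by Proposition \ref{s-w-noe-s-exact}, which only asks for $w$-exactness of the underlying short exact sequence. Since every ordinary short exact sequence of $R$-modules is automatically $w$-exact, this substitution removes the $w$-module hypothesis and yields the corollary as stated. As in the earlier propositions, I will only spell out the uniform case; the $S$-version follows by the same argument, allowing $s$ to depend on the submodule.

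Concretely, starting from the $u$-$S$-exact sequence $0\rightarrow A\xrightarrow{f} B\xrightarrow{g} C\rightarrow 0$, I fix $s\in S$ with $s\Ker(g)\subseteq \Im(f)$, $s\Im(f)\subseteq \Ker(g)$, $s\Ker(f)=0$, and $s\Coker(g)=0$. A preliminary observation used repeatedly is that every $u$-$S$-torsion module $T$ is automatically $u$-$S$-$w$-Noetherian: for any submodule $N\subseteq T$, the zero submodule witnesses $S$-$w$-finiteness of $N$ with respect to the annihilating element. This applies in particular to $\Ker(f)$, $\Coker(g)$, $\Im(f)/s\Ker(g)$, and $\Ker(g)/s\Im(f)$.

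Next, I would feed the ordinary exact sequences $0\to s\Ker(g)\to \Im(f)\to \Im(f)/s\Ker(g)\to 0$ and $0\to s\Im(f)\to \Ker(g)\to \Ker(g)/s\Im(f)\to 0$, together with the inclusions $s\Im(f)\subseteq \Ker(g)$ and $s\Ker(g)\subseteq \Im(f)$, into Proposition \ref{s-w-noe-s-exact} to conclude that $\Im(f)$ is $u$-$S$-$w$-Noetherian if and only if $\Ker(g)$ is. Finally, the three ordinary exact sequences $0\to \Ker(g)\to B\to \Im(g)\to 0$, $0\to \Im(g)\to C\to \Coker(g)\to 0$, and $0\to \Ker(f)\to A\to \Im(f)\to 0$, combined once more with Proposition \ref{s-w-noe-s-exact} and the already established fact that $\Ker(f)$ and $\Coker(g)$ are $u$-$S$-$w$-Noetherian, give the desired two-sided equivalence between $B$ being $u$-$S$-$w$-Noetherian and both $A$ and $C$ being so.

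The only delicate point is that the $u$-$S$-torsion intermediates produced by $u$-$S$-exactness (rather than honest exactness) should never force us back into the $w$-module hypothesis of Proposition \ref{s-u-noe-exact}; this is precisely what the upgrade to Proposition \ref{s-w-noe-s-exact} is designed to absorb, so no new ideas beyond those in the proofs of Propositions \ref{s-u-noe-s-exact} and \ref{s-w-noe-s-exact} are required.
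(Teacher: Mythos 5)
Your proposal is correct and is essentially the route the paper intends (the paper prefaces this corollary with ``Similarly, we have the following corollaries'' right after Proposition~\ref{s-w-noe-s-exact}). The substitution of Proposition~\ref{s-w-noe-s-exact} for Proposition~\ref{s-u-noe-exact} is exactly what is needed to drop the $w$-module hypothesis, your observation that $u$-$S$-torsion modules are trivially $u$-$S$-$w$-Noetherian (witnessed by the zero submodule) is valid, and the chain of ordinary short exact sequences you feed through Proposition~\ref{s-w-noe-s-exact} assembles the two-sided equivalence correctly, including uniformity of the multiplier.
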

\begin{corollary} 
	Let $R$ be a ring, $S$ a multiplicative subset of $R$ and $ M\xrightarrow{f} N$  a $w$-isomorphism. If one of $M$ and $N$ is $(u$-$)S$-$w$-Noetherian, so is the other.
\end{corollary}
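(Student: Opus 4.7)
The plan is to reduce the corollary directly to Proposition \ref{s-w-noe-s-exact}, in exact analogy with the way Corollary \ref{s-u-noe-u-iso} was derived from the $u$-$S$-exact version. The key observation is that a $w$-isomorphism $f:M\rightarrow N$ can be packaged as a short exact sequence which is $w$-exact in the sense of the excerpt.

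First I would recall that $f:M\rightarrow N$ being a $w$-isomorphism means that $f_p:M_p\rightarrow N_p$ is an isomorphism for every $p\in w\mbox{-}\Max(R)$, equivalently, both $\Ker(f)$ and $\Coker(f)$ are $\GV$-torsion. Next, I would consider the sequence
\[
0\longrightarrow M\xrightarrow{\ f\ } N\longrightarrow 0\longrightarrow 0,
\]
which I interpret as $0\rightarrow A\rightarrow B\rightarrow C\rightarrow 0$ with $A=M$, $B=N$ and $C=0$. Localizing at any $p\in w\mbox{-}\Max(R)$ yields $0\rightarrow M_p\rightarrow N_p\rightarrow 0\rightarrow 0$; this is exact precisely because $f_p$ is an isomorphism. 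Hence the sequence is $w$-exact in the sense defined in the introduction.

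Finally, I would apply Proposition \ref{s-w-noe-s-exact} to this $w$-exact sequence. Since the zero module is trivially $(u$-$)S$-$w$-Noetherian, the proposition gives that $N$ is $(u$-$)S$-$w$-Noetherian if and only if $M$ is, which is the claim. I do not foresee any real obstacle here; the only mild subtlety is checking that the four-term sequence above qualifies as a $w$-exact short exact sequence, and this is immediate from the definition of $w$-isomorphism. The argument mirrors the proof of Corollary \ref{s-u-noe-u-iso} word for word, with $u$-$S$-exactness replaced by $w$-exactness and Proposition \ref{s-u-noe-exact} replaced by Proposition \ref{s-w-noe-s-exact}.
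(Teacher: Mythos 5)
Your proof is correct and is essentially the argument the paper intends: the paper states this corollary immediately after Proposition~\ref{s-w-noe-s-exact} with the remark ``Similarly,'' signalling the same reduction you carry out, in parallel to how Corollary~\ref{s-u-noe-u-iso} is derived from Proposition~\ref{s-u-noe-exact}. Packaging the $w$-isomorphism as the $w$-exact sequence $0\rightarrow M\xrightarrow{f} N\rightarrow 0\rightarrow 0$ and invoking Proposition~\ref{s-w-noe-s-exact} with the trivially $(u$-$)S$-$w$-Noetherian zero module is exactly right.
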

\begin{corollary} \label{s-w-noe-u-iso}
	Let $R$ be a ring, $S$ a multiplicative subset of $R$. The direct sum $\bigoplus\limits_{i=1}^n M_i$  is $(u$-$)S$-$w$-Noetherian if and only if so is each $M_i$.
\end{corollary}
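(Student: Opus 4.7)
The plan is to reduce to the case $n=2$ by a routine induction on $n$, and then apply the previous corollary (the one about $u$-$S$-exact sequences of $R$-modules) to the split short exact sequence
\[
0 \longrightarrow M_1 \longrightarrow M_1 \oplus M_2 \longrightarrow M_2 \longrightarrow 0.
\]
Since this is an honest exact sequence of $R$-modules, it is in particular $u$-$S$-exact (take $s=1$), so the previous corollary immediately gives that $M_1 \oplus M_2$ is $(u$-$)S$-$w$-Noetherian if and only if both $M_1$ and $M_2$ are.

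For the inductive step, assuming the result for $n-1$, write
\[
\bigoplus_{i=1}^n M_i \cong M_1 \oplus \Bigl(\bigoplus_{i=2}^n M_i\Bigr),
\]
and apply the $n=2$ case together with the induction hypothesis on $\bigoplus_{i=2}^n M_i$.

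There is no genuine obstacle here: the substance of the argument has already been done in Propositions \ref{s-u-noe-exact} and \ref{s-u-noe-s-exact} and the preceding corollary, and the statement is the standard finite-direct-sum reformulation. The only thing to be mindful of is stating the proof uniformly for the $S$-version and the $u$-$S$-version, but both follow from the same short exact sequence plus the same corollary, so a single sentence invoking the corollary handles both cases simultaneously.
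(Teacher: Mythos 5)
Your proof is correct and follows essentially the same route as the paper: the paper cites Proposition \ref{s-w-noe-s-exact} (the $w$-exact version), while you invoke the immediately following corollary (the $u$-$S$-exact version), but both apply to the honest split exact sequence $0\to M_1\to M_1\oplus M_2\to M_2\to 0$, and the finite induction you spell out is exactly what the paper leaves implicit.
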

\begin{proof}
	It follows by Proposition \ref{s-w-noe-s-exact}.
\end{proof}

The rest of this section is devoted to give some differences and relations of classical and new concepts. 
In general, we have the following implications:

$$\xymatrix@R=30pt@C=20pt{
	\boxed{w\mbox{-}PIM} \ar[d]_{}\ar[r]^{} & \boxed{u\mbox{-}S \mbox{-}w\mbox{-}PIM}\ar[r]^{}\ar[d]_{} &\boxed{S\mbox{-}w\mbox{-}PIM}  \ar[d]_{} \\
\boxed{\begin{matrix}w\mbox{-}Notherian\\  module\end{matrix}} \ar[r]^{} &\boxed{\begin{matrix} u\mbox{-}S\mbox{-}w\mbox{-}Notherian\\ module\end{matrix}} \ar[r]^{} &\boxed{\begin{matrix}S\mbox{-}w\mbox{-}Notherian\\ module\end{matrix}}\\ }$$


\begin{proposition}\label{mutl-usm}
	Let $R_i$ be a ring, $S_i$  a multiplicative subset of $R_i$ and $M_i$ an $R_i$-module $(i=1,2)$. Set
	$R= R_1\times R_2, S=S_1\times S_2$ and $M=M_1\times M_2$. Then $M$ is a $(u$-$)S$-$w$-Noetherian module $($resp., $(u$-$)S$-$w$-PIM$)$ over $R$ if and only if each  $M_i$ is a $(u$-$)S_i$-$w$-Noetherian module $($resp., $(u$-$)S_i$-$w$-PIM$)$ over $R_i$.
\end{proposition}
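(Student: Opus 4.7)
The plan is to reduce everything to the observation that every $R$-submodule of $M = M_1 \times M_2$ splits as a product $N_1 \times N_2$ with $N_i$ an $R_i$-submodule of $M_i$, and then to show that the relevant notions ($w$-operation, finitely generated, principally generated, $S$-$w$-finiteness, etc.) respect this product decomposition. I would first establish the two componentwise lemmas: (i) a submodule $N_1 \times N_2$ of $M$ is finitely generated over $R$ (resp.\ principally generated over $R$) if and only if each $N_i$ is finitely generated (resp.\ principally generated) over $R_i$, which is immediate from writing a set of generators in terms of the idempotents $e_1 = (1,0)$ and $e_2 = (0,1)$; and (ii) $\GV(R) = \{J_1 \times J_2 \mid J_i \in \GV(R_i)\}$, together with the consequence that a module $T = T_1 \times T_2$ over $R$ is $\GV$-torsion if and only if each $T_i$ is $\GV$-torsion over $R_i$.

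To establish (ii), I would use that every finitely generated ideal $J$ of $R = R_1 \times R_2$ decomposes as $J = J_1 \times J_2$, and the natural map $R \to \Hom_R(J,R)$ factors as the product of the natural maps $R_i \to \Hom_{R_i}(J_i, R_i)$; this is an isomorphism on $R$ exactly when each factor is an isomorphism on $R_i$. Thus $J \in \GV(R)$ iff $J_i \in \GV(R_i)$ for both $i$. Given an element $(x_1, x_2)$ of a product module, its annihilator is the product of the component annihilators, so $(x_1, x_2)$ is $\GV$-torsion in $M$ precisely when each $x_i$ is $\GV$-torsion in $M_i$.

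With these facts in hand, both directions become mechanical. For the forward direction, fix an $R_i$-submodule $N_i \subseteq M_i$; then $N_1 \times 0$ (or $0 \times N_2$) is an $R$-submodule of $M$, so there is $s = (s_1, s_2) \in S$ and a finitely generated (resp.\ principal) $R$-submodule $K \subseteq N_1 \times 0$, necessarily of the form $K_1 \times 0$, such that $s(N_1 \times 0)/K$ is $\GV$-torsion over $R$; by (ii), $s_1 N_1 / K_1$ is $\GV$-torsion over $R_1$, showing $M_1$ is $S_1$-$w$-Noetherian (resp.\ $S_1$-$w$-PIM). In the uniform case the same $s$ works for all choices of $N_1$. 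For the reverse direction, given an arbitrary $R$-submodule $N = N_1 \times N_2$ of $M$, choose $s_i \in S_i$ and finitely generated (resp.\ principal) $K_i \subseteq N_i$ with $s_i N_i/K_i$ $\GV$-torsion over $R_i$; then $s = (s_1, s_2) \in S$, the module $K = K_1 \times K_2$ is finitely generated (resp.\ principal) over $R$, and $s(N/K) \cong (s_1 N_1/K_1) \times (s_2 N_2/K_2)$ is $\GV$-torsion over $R$ by (ii).

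The main obstacle is verifying (ii) cleanly, since the definition of $\GV$-ideal involves a $\Hom$ calculation that one must check interacts correctly with the product ring structure; once this is in place, the principal-module version requires only noting that a cyclic $R$-module $R(a_1,a_2) = R_1 a_1 \times R_2 a_2$ is exactly a product of cyclic modules, so the $w$-PIM case runs in parallel with the Noetherian case with no additional argument.
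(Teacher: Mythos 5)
Your proposal is correct and takes essentially the same approach as the paper, which simply cites the key observation that an ideal $I = I_1 \times I_2$ of $R_1 \times R_2$ lies in $\GV(R)$ if and only if each $I_i \in \GV(R_i)$ and then omits the remaining details. Your write-up fills in exactly those details (submodules and finitely/principally generated submodules decompose componentwise via the idempotents, and $\GV$-torsion passes to factors), so it matches the intended argument.
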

\begin{proof} This result easily follows by that an ideal $I=I_1\times I_2\in \GV(R)$ if and only if each $I_i\in \GV(R_i)$. So we omit its proof.
\end{proof}

The following example shows that $u$-$S$-$w$-Noetherian modules  $($resp., $u$-$S$-$w$-PIMs$)$ need not be $w$-Noetherian modules $($resp., $w$-PIMs$)$ in general.
\begin{example} Let $M_1$ be a $w$-Noetherian module $($resp., $w$-PIM$)$ over a ring $R_1$, and $M_2$ be a non-$w$-Noetherian module $($resp., non-$w$-PIM$)$ over a ring $R_2$. Set $R=R_1\times R_2$, $M=M_1\times M_2$ and $S=\{1\}\times\{0,1\}$. Then $M$ is a $u$-$S$-$w$-Noetherian module $($resp., $u$-$S$-$w$-PIM$)$  but not a $w$-Noetherian module $($resp., $w$-PIM$)$ by Proposition \ref{mutl-usm}.
\end{example}

A multiplicative subset $S$ of $R$ is said to satisfy the \emph{maximal multiple condition} if there
exists an $s\in S$ such that $t|s$ for each $t\in S$. Both finite multiplicative subsets and the  multiplicative subsets that consist of units  satisfy the maximal multiple condition.

\begin{proposition} \label{s-loc-u-noe-fini}
	Let $R$ be a ring, $S$ a multiplicative subset of $R$ satisfying maximal multiple condition and $M$ an $R$-module. Then $M$ is a $u$-$S$-$w$-Noetherian module $($resp., $u$-$S$-$w$-PIM$)$ if and only if $M$ is an  $S$-$w$-Noetherian module  $($resp.,  $S$-$w$-PIM$)$.
\end{proposition}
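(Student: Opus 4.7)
The forward direction is immediate from the definitions: a $u$-$S$-$w$-Noetherian ring has a single $s\in S$ witnessing $S$-$w$-finiteness for every ideal, so it is in particular $S$-$w$-Noetherian, and similarly for the PIR version.

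For the converse, the plan is to exploit the maximal multiple condition to produce a single uniform element $s\in S$ that dominates all the ideal-by-ideal witnesses. Fix $s\in S$ with $t\mid s$ for every $t\in S$. Assume $R$ is $S$-$w$-Noetherian and let $I$ be an arbitrary ideal of $R$. By hypothesis there exist $s_I\in S$ and a finitely generated sub-ideal $N_I\subseteq I$ such that $s_I(I/N_I)$ is $\GV$-torsion. Write $s=u s_I$ for some $u\in R$; then $s(I/N_I)=u\cdot s_I(I/N_I)$, and since $s_I(I/N_I)$ is an $R$-submodule of $I/N_I$, multiplication by $u$ sends it into itself. Thus $s(I/N_I)\subseteq s_I(I/N_I)$.

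The key observation that makes the argument work is that the class of $\GV$-torsion modules is closed under submodules (each element of a submodule inherits an annihilating $\GV$-ideal from the ambient module). Consequently $s(I/N_I)$ is $\GV$-torsion, so $I$ is $S$-$w$-finite with respect to the single element $s\in S$ that was chosen independently of $I$. This shows $R$ is $u$-$S$-$w$-Noetherian. The PIR version requires no change: the $N_I$ produced by the $S$-$w$-PIR hypothesis is principally generated, and the same $s$ works uniformly for every ideal.

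The argument is essentially a bookkeeping step, and I do not anticipate any genuine obstacle. The only point one needs to be a little careful about is verifying that $\GV$-torsion passes to submodules (so that we can replace the $I$-dependent $s_I$ by the uniform $s$), but this is immediate from the definition $\tor(M)=\{x\in M\mid Jx=0\text{ for some }J\in\GV(R)\}$.
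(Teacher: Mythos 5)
Your proof is correct and follows essentially the same route as the paper: fix a dominating $s$ from the maximal multiple condition, observe that $s(I/N_I)\subseteq s_I(I/N_I)$, and conclude by closure of $\GV$-torsion under submodules. The paper's proof does exactly this in one line ($s(N/K)\subseteq s_N(N/K)$ is $\GV$-torsion), while you make the submodule-closure step explicit, which is a harmless addition of detail.
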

\begin{proof}  Suppose  $M$ is a $u$-$S$-$w$-Noetherian module $($resp.,$u$-$S$-$w$-PIM$)$. Then trivially $M$ is an $S$-$w$-Noetherian module (resp., $S$-$w$-PIM). Let $s\in S$ such that $t|s$ for each $t\in S$. Suppose $M$ is an $S$-$w$-Noetherian module (resp., $S$-$w$-PIM). Then for a submodule $N$ of $M$, there is a finitely (resp., principally) generated submodule $K$ of $N$
	such that $s_K(N/K)$ is $\GV$-torsion for some $s_N\in S$. Then $s(N/K)\subseteq s_N(N/K)$ is $\GV$-torsion. So $M$ is a $u$-$S$-$w$-Noetherian module $($resp., $u$-$S$-$w$-PIM$)$ with respect to $s$.
\end{proof}

Let $R$ be a ring, $M$ an $R$-module and $S$ a multiplicative subset of $R$. For any $s\in S$, there is a  multiplicative subset $S_s=\{1,s,s^2,....\}$ of $S$. We denote by $M_s$ the localization of $M$ at $S_s$. Certainly, $M_s\cong M\otimes_RR_s$. A multiplicative subset of $R$ is said to be regular if it consists of non-zero-divisors.

\begin{proposition} \label{s-loc-u-noe}
	Let $R$ be a ring, $S$ a regular multiplicative subset of $R$ and $M$ an $R$-module. If  $M$ is a $u$-$S$-$w$-Noetherian module $($resp., $u$-$S$-$w$-PIM$)$ over $R$, then there exists an element $s\in S$ such that $M_{s}$ is a $w$-Noetherian module $($resp., $w$-PIM$)$ over $R_s$.
\end{proposition}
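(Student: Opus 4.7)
The plan is to use the uniform witness $s\in S$ provided by the $u$-$S$-$w$-Noetherian (resp.\ $u$-$S$-$w$-PIM) hypothesis and show that this same $s$ does the job: $M_s$ will be a $w$-Noetherian module (resp.\ a $w$-PIM) over $R_s$. Concretely, fix $s\in S$ such that for every $R$-submodule $N\subseteq M$ there is a finitely (resp.\ principally) generated submodule $K\subseteq N$ with $s(N/K)$ being $\GV(R)$-torsion.

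Given any $R_s$-submodule $N'\subseteq M_s$, I set $N:=\phi^{-1}(N')$, where $\phi\colon M\to M_s$ is the canonical map; a standard contraction--extension argument shows that $N$ is an $R$-submodule of $M$ with $N_s=N'$. Applying the uniform hypothesis to $N$ produces a finitely (resp.\ principally) generated $K\subseteq N$ with $s(N/K)$ being $\GV(R)$-torsion. Localizing at $S_s=\{1,s,s^2,\ldots\}$, the module $K_s$ becomes a finitely (resp.\ principally) generated $R_s$-submodule of $N_s=N'$, and exactness of localization gives $N'/K_s\cong(N/K)_s$. The task is therefore reduced to verifying that $(N/K)_s$ is $\GV(R_s)$-torsion.

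The main technical input I will invoke is the standard lemma that $J\in\GV(R)$ implies $JR_s\in\GV(R_s)$ (which can be extracted from the material in \cite{fk16}). Granting this, take any representative $\bar n/s^k\in(N/K)_s$ with $n\in N$. By hypothesis there exists $J\in\GV(R)$ with $sJ\bar n=0$ in $N/K$, so in $(N/K)_s$ we obtain $sJ_s\cdot(\bar n/s^k)=0$. Since $s$ is a unit in $R_s$ one has $sJ_s=J_s$ as ideals of $R_s$, and $J_s\in\GV(R_s)$ then annihilates $\bar n/s^k$. Hence every element of $(N/K)_s$ is $\GV(R_s)$-torsion, so $K_s\subseteq N'$ witnesses that $N'$ is $w$-finite (resp.\ $w$-principal) over $R_s$, and $M_s$ is $w$-Noetherian (resp.\ a $w$-PIM).

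I expect the only real obstacle to be the localization behaviour of $\GV$-ideals; everything else is a clean transfer of data along the flat ring map $R\to R_s$, using the fact that the uniform $s$ becomes an honest unit after localization. The regularity hypothesis on $S$ enters only to keep the passage $R\to R_s$ and $M\to M_s$ well-behaved (in particular to ensure that the inclusions $K\hookrightarrow N\hookrightarrow M$ really do localize to inclusions $K_s\hookrightarrow N_s\hookrightarrow M_s$ without being collapsed by $s$-torsion phenomena), but it does not play any further role in the argument.
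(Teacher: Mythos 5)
Your proof is essentially the same as the paper's: both fix the uniform witness $s$, pass to an $R$-submodule $N$ of $M$, apply the $u$-$S$-$w$-Noetherian hypothesis to extract a finitely (resp.\ principally) generated $K\subseteq N$ with $s(N/K)$ being $\GV$-torsion, localize, and invoke the key fact that $J\in\GV(R)$ implies $J_s\in\GV(R_s)$ (the paper cites \cite[Lemma 4.3(1)]{WQ15} for this) together with $s$ being a unit in $R_s$. You are more explicit about the contraction--extension step $N:=\phi^{-1}(N')$ with $N_s=N'$, which the paper leaves implicit; that is a genuine improvement in exposition. One small inaccuracy: your explanation of the role of regularity is off. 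Localization at $S_s$ is always exact, so the inclusions $K\hookrightarrow N\hookrightarrow M$ always localize to inclusions regardless of whether $s$ is a zero divisor; there is no ``collapsing'' issue to worry about on that front. The regularity hypothesis is instead what supports the $\GV$-ideal localization lemma you invoke (and is also what guarantees that $R_s$ is nontrivial and the content of the statement is non-vacuous). This does not affect the validity of the argument, only the side commentary.
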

\begin{proof} Since $M$ is a $u$-$S$-$w$-Noetherian module $($resp., $u$-$S$-$w$-PIM$)$, there exists an element $s\in S$ satisfies that for any submodule $N$ of $M$ there is a finitely (resp., principally) generated submodule $K$ of $N$ such that $s(N/K)$ is $\GV$-torsion.  So for any $n\in N$, there is $J\in\GV(R)$ such that  $Jsn\subseteq K$. Hence for any  $\frac{n}{s^m}\in N_s$ we have $J_s\frac{n}{s^m}\subseteq K_s$. Since $J_s\in\GV(R_s)$ by \cite[Lemma 4.3(1)]{WQ15}, we have $N_s/K_s$ is a $\GV$-torsion $R_s$-module. Consequently, $M_{s}$ is a $w$-Noetherian module (resp., $w$-PIM) over $R_s$.
\end{proof}

The following example shows that $S$-$w$-Noetherian rings  $($resp., $S$-$w$-PIRs$)$ need not be $u$-$S$-$w$-Noetherian rings  $($resp., $u$-$S$-$w$-PIRs$)$ in general.
\begin{example} Let $R=\mathbb{Q}+x\mathbb{R}[[x]]$ be the subring of formal power series ring $T=\mathbb{R}[[x]]$ with constants in $\mathbb{R}$ the set of all real numbers, where $\mathbb{Q}$ is the set of all rational numbers.  Indeed, let $0\not=s=a+xf(x)\in R$. We divide it into two cases. Case I: $a\not=0$. In this case, $s$ is a unit in $R$, and so  $R_{s}\cong R$. So $R$ can fit into a Milnor square of type II:
$$\xymatrix@R=32pt@C=35pt{R\ar@{^{(}->}[r]^{}\ar@{->>}[d]_{} & \mathbb{R}[[x]] \ar@{->>}[d]\\
\mathbb{Q} \ar@{^{(}->}[r]^{} & \mathbb{R}.}$$ Case II:  $a=0$. In this case, $R_s\cong \mathbb{Q}+(x\mathbb{R}[[x]])_{xf(x)}\cong \mathbb{Q}+(x\mathbb{R}[[x]])_{x}$. So $R_s$ can fit into a Milnor square of type II:
	$$\xymatrix@R=32pt@C=35pt{
		R_s\ar@{^{(}->}[r]^{}\ar@{->>}[d]_{} & \mathbb{R}[[x]][x^{-1}] \ar@{->>}[d]\\
		\mathbb{Q} \ar@{^{(}->}[r]^{} & \mathbb{R}.}$$
	Hence $R_s$ is not $w$-Noetherian by \cite[Theorem 8.4.16]{fk16} in  both cases. 
	
Set $S=R-\{0\}$. Then it is easy to verify that $R$ is an $S$-PIR, and so is an  $S$-$w$-PIR and is an  $S$-$w$-Noetherian ring. But $R$ is not a  $u$-$S$-$w$-Noetherian ring by Proposition \ref{s-loc-u-noe}. 
\end{example}

\section{On characterizations of $u$-$S$-$w$-Noetherian rings and modules}

It is well known that an $R$-module $M$ is a Noetherian module (resp., PIM) if and only if every countably generated submodule of $M$ is finitely (resp., principally) generated. Now we consider some more general versions. 
\begin{theorem}\label{u-countably}
	Let $R$ be a ring, $S$ an anti-Archimedean multiplicative subset of $R$ and $M$ an $R$-module. Then $M$ is a $u$-$S$-$w$-Noetherian module $($resp., $u$-$S$-$w$-PIM$)$ if and only if there is $s\in S$ such that  every countably generated submodule of $M$ is $S$-$w$-finite $($resp., $S$-$w$-principal$)$ with respect to $s$.
\end{theorem}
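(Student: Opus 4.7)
The forward direction is immediate: if $M$ is $u$-$S$-$w$-Noetherian (resp., $u$-$S$-$w$-PIM) with respect to some $s\in S$, then every submodule---in particular every countably generated submodule---is $S$-$w$-finite (resp., $S$-$w$-principal) with respect to $s$. The substantive content is the converse, which I plan to prove by contradiction via an inductive construction, reducing the PIM case to the Noetherian case.

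For the Noetherian direction, suppose every countably generated submodule of $M$ is $S$-$w$-finite with respect to a fixed $s\in S$, and use the anti-Archimedean hypothesis to pick $t\in\bigcap_{n\ge 1}s^nR\cap S$. The claim is that $M$ is $u$-$S$-$w$-Noetherian with respect to $t$. If not, some submodule $N\subseteq M$ satisfies: for every finitely generated $K\subseteq N$, $t(N/K)$ is not $\GV$-torsion. One then inductively picks $x_1,x_2,\dots\in N$ so that $tx_{n+1}\notin\langle x_1,\dots,x_n\rangle_w$, using the non-$\GV$-torsion hypothesis at each step to supply $x_{n+1}$. Setting $N'=\langle x_1,x_2,\dots\rangle$, the hypothesis gives a finitely generated $L\subseteq N'$ with $s(N'/L)$ $\GV$-torsion; since $L$ is finitely generated, $L\subseteq K_n:=\langle x_1,\dots,x_n\rangle$ for some $n$, so $Jsx_{n+1}\subseteq K_n$ for some $J\in\GV(R)$. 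Writing $t=s^{n+1}r$, one gets $Jtx_{n+1}=s^nr\cdot Jsx_{n+1}\subseteq K_n$, i.e., $tx_{n+1}\in (K_n)_w$, contradicting the construction. The main pinch-point is precisely this last step: the anti-Archimedean factorization $t=s^{n+1}r$ is what promotes the $s$-$\GV$-torsion supplied by the hypothesis into the $t$-$\GV$-torsion needed for the contradiction.

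For the PIM direction I would piggyback on the Noetherian case. Since every $S$-$w$-principal module is $S$-$w$-finite, the hypothesis entails that $M$ is $u$-$S$-$w$-Noetherian via the above, with the same uniform multiplier $t$. Given any submodule $N\subseteq M$, take a finitely generated $K\subseteq N$ with $t(N/K)$ $\GV$-torsion. Then $K$ is countably (in fact finitely) generated, hence $S$-$w$-principal with respect to $s$, supplying $y\in K$ with $s(K/\langle y\rangle)$ $\GV$-torsion; because $K$ is finitely generated, a single $J\in\GV(R)$ satisfies $JsK\subseteq\langle y\rangle$ (obtained by taking the product of the $\GV$-ideals annihilating $sw_i$ modulo $\langle y\rangle$ for finitely many generators $w_i$ of $K$). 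For each $z\in N$, $J_ztz\subseteq K$ for some $J_z\in\GV(R)$, whence $JJ_z\cdot stz\subseteq JsK\subseteq\langle y\rangle$, so $st(N/\langle y\rangle)$ is $\GV$-torsion and $M$ is $u$-$S$-$w$-PIM with respect to $st\in S$. This PIM step is essentially bookkeeping combining two $\GV$-torsion conditions, and is straightforward once the Noetherian case is in hand.
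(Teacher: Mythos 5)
Your proof is correct, and it follows a genuinely different and in fact cleaner route than the paper's. The paper argues by transfinite induction: given an arbitrary submodule $N=\langle f_\alpha\mid\alpha<\gamma\rangle$, it isolates the index set $\Gamma$ of ``bad'' ordinals and shows, by an iterated reduction that can consume a new power of $s$ at each stage, that $t(N/\langle f_\beta\mid\beta\in\Gamma\rangle)$ is $\GV$-torsion and that $\Gamma$ must be finite. Because the iteration depth varies with the generator, the paper genuinely needs $t\in\bigcap_{n\geq1}s^nR\cap S$, i.e.\ the anti-Archimedean hypothesis. You instead run the classical contrapositive: from a submodule $N$ that is not $S$-$w$-finite with respect to the uniform multiplier, you extract a countable strictly ascending chain $K_1\subsetneq K_2\subsetneq\cdots$ inside $N$ and apply the hypothesis once to the countable union $N'$. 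The hypothesis hands you a finitely generated $L\subseteq N'$ with $s(N'/L)$ $\GV$-torsion, and since $L$ sits inside some $K_n$, you get $Jsx_{n+1}\subseteq K_n$ for some $J\in\GV(R)$ --- contradicting the choice of $x_{n+1}$.

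One observation worth making explicit: you only need a \emph{single} step of $s$-promotion at a fixed $n$, so all that is used is $t\in sR\cap S$; taking $t=s$ already works, and the display $t=s^{n+1}r$ is an unnecessary over-approximation (it reads a bit confusingly because $n$ is determined only after $L$ is produced, but since $t\in\bigcap_m s^mR$ it is harmless). In particular, your argument does not actually invoke the anti-Archimedean condition at all, and thus proves the slightly stronger statement that the theorem holds for an arbitrary multiplicative set $S$, with the uniform multiplier $s$ (resp.\ $s^2$ in the PIM case) rather than some $t\in\bigcap s^nR\cap S$. This is a genuine improvement over the paper's formulation. Your PIM step --- first use the Noetherian case, take $K$ with $s(N/K)$ $\GV$-torsion, make $K$ $S$-$w$-principal by hypothesis, and combine the two $\GV$-ideals to get $s^2(N/Ry)$ $\GV$-torsion --- is the same reduction the paper compresses into ``(resp., by assumption)'' and is correctly carried out. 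The only cosmetic caveat is the phrase ``$tx_{n+1}\notin\langle x_1,\dots,x_n\rangle_w$'': since $K_n$ need not be $\GV$-torsion-free, $(K_n)_w$ is not literally defined; the precise (and clearly intended) condition is that $tx_{n+1}$ is not a $\GV$-torsion element of $N/K_n$, which is what the non-$\GV$-torsion hypothesis supplies.
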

\begin{proof} The necessity clearly holds. Now we prove the sufficiency. 	Let $N=\langle f_{\alpha}\mid \alpha<\gamma\rangle$ be an $R$-submodule of $M$, where $\gamma$ is a cardinal. Consider the index set $$\Gamma=\{\beta\mid s(\langle f_{\alpha}\mid\alpha\leq\beta\rangle/\langle f_{\alpha}\mid\alpha<\beta\rangle)\ \mbox{is not GV-torsion}\}.$$
	
Let $t\in\bigcap\limits_{n\geq 1}s^nR\bigcap S$.	Then we claim that $t(N/ \langle f_\beta\mid \beta\in\Gamma\rangle)$ is $\GV$-torsion. Indeed, let  $f_{\beta'}\in N$. If $\beta'\in\Gamma$, we have done. Otherwise, $J_1sf_{\beta'}\subseteq \langle f_{\alpha}\mid\alpha<\beta'\rangle$  for some $J_1\in\GV(R)$. Suppose $\beta'$ is a limit ordinal, then  $J_1sf_{\beta'}\subseteq \langle f_{\alpha}\mid\alpha<\beta''\rangle$ for some successor ordinal $\beta''$, since $J_1sf_{\beta'}$ is finitely generated. So we only consider the successor ordinal case. Now assume $\beta'$ is a successor ordinal, then $J_1sf_{\beta'}\subseteq \langle f_{\alpha}\mid\alpha\leq \beta'-1\rangle$, and hence every element in $J_1sf_{\beta'}$ is a finite linear combination of $f_\alpha$ with $\alpha\leq \beta'-1$. If all such finite $\alpha$ are in $\Gamma$, we have done. Otherwise, we may assume each $f_\alpha$ with $\alpha\not\in\Gamma$ satisfy $J_2sf_\alpha$  is a finite linear combination of $f_\alpha$ with $\alpha\leq \beta'-2.$ Hence every element in  $J_1J_2s^2f_{\beta'}$ is a finite linear combination of $f_\alpha$ with $\alpha\leq \beta'-2.$ Such iterating steps terminate in finite steps. Indeed,  on contrary assume the iterating steps don't terminate. Denote by $A$ the set of all such $\alpha$. Then $A$ is countable. Denote by $K$ the $R$-module generated by all such  $f_\alpha$ with $\alpha\in A$. So there is a finitely (resp., principally) generated submodule $K'$ such that $s(K/K')$ is $\GV$-torsion by assumption. Taking $B$ to be the set of $\beta$ such that $f_{\beta}$ appears in the finite generators of $K'$, we have $B$ is finite and  $J_0sf_{\beta'}\subseteq\langle f_\beta\mid \beta\in B\rangle$. So each $\alpha\in A-B$ is not in $\Gamma$, and hence the iterating steps terminate in finite steps. It follows that there exists $J\in\GV(R)$ and an integer $n$ such that every element in $Js^nf_{\beta'}$ is a finite linear combination of $f_\alpha$ with $\alpha\in \Gamma,$ and hence $t(N/ \langle f_\beta\mid \beta\in\Gamma\rangle)$ is $\GV$-torsion.

If $\Gamma$ is finite, then $N$ is $S$-$w$-finite with respect to $t$, and so we have done (resp., by assumption). Otherwise assume $\Gamma$ is infinite. Take $\Gamma'=\{\beta_1<\beta_2<\cdots\}\subseteq \Gamma.$ Note that $\langle f_\beta\mid \beta\in\Gamma'\rangle$ is $S$-$w$-finite (resp., $S$-$w$-principal) with respect to $s$. Since every element in $\langle f_\beta\mid \beta\in\Gamma'\rangle$ is a finite combination of $f_\beta$, there is a finite subset $\Gamma''$	of $\Gamma'$ such that $s(\langle f_\beta\mid \beta\in\Gamma'\rangle/\langle f_\beta\mid \beta\in\Gamma''\rangle)$ is $\GV$-torsion. So there exists an integer $i$ such that no $\beta_j\in \Gamma'$ with $j\geq i$, which is a contradiction. Consequently, $M$ is a $u$-$S$-$w$-Noetherian module with respect to $t$.
\end{proof}

It was proved in \cite[Theorem 6.8.4]{fk16} that a $w$-module $M$ is a $w$-Noetherian module if and only if  every ascending chain of $w$-submodules is stable. Now, we give the Eakin-Nagata-Formanek Theorem for $u$-$S$-$w$-Noetherian modules.

\begin{theorem} \label{u-s-noe-char} {\bf (Eakin-Nagata-Formanek Theorem for $u$-$S$-$w$-Noetherian modules)}
	Let $R$ be a ring and $S$ an anti-Archimedean multiplicative subset of $R$. Let $M$ be a $w$-module.  Then the following statements are equivalent:
	\begin{enumerate}
		\item $M$ is $u$-$S$-$w$-Noetherian;
		\item there exists $s\in S$ such that any  ascending chain of $w$-submodules of $M$ is stationary with respect to $s$.
	\end{enumerate}
\end{theorem}
\begin{proof} $(1)\Rightarrow (2):$ Suppose $M$ is $u$-$S$-$w$-Noetherian with respect to some $s\in S$. Let $M_1\subseteq M_2\subseteq\cdots$  be an  ascending chain of $w$-submodules of $M$. Set $M_0=\bigcup\limits_{i=1}^{\infty}M_i$. Then $M_0$ is a $w$-module, and so there exists a finitely generated submodule $N_i$ of $M_i$ such that $sM_i\subseteq (N_i)_w$ for each $i\geq 0$. 
	Since $N_0$ is finitely generated, there exists $k\geq 1$ such that $N_0\subseteq M_k$. It follows that  $sM_0\subseteq (N_0)_w\subseteq M_k$. So $sM_n\subseteq M_k$ for any $n\geq k$.
	
$(2)\Rightarrow (1):$ Let $M$ be a countably generated $R$-module, say generated by $\{m_1,m_2,\dots\}.$ Set $M_n=\langle m_1,\dots,m_n\rangle$ for each $n\geq 1$. Then $(M_1)_w\subseteq (M_2)_w\subseteq\cdots$ is an  ascending chain of $w$-submodules of $M_w=M$. So there exists $k\geq 1$ such that $s(M_n)_w\subseteq (M_k)_w$ for any $n\geq k$. Hence $sM\subseteq (M_k)_w$, and so $M$ is $S$-$w$-finite $($resp., $S$-$w$-principal$)$ with respect to $s$. It follows by Theorem \ref{u-countably} that $M$ is $u$-$S$-$w$-Noetherian.	
\end{proof}

\begin{theorem}\label{u-countablys}
	Let $R$ be a ring, $S$ an anti-Archimedean multiplicative subset of $R$ and $M$ an $R$-module. Then $M$ is a $u$-$S$-Noetherian module $($resp., $u$-$S$-PIM$)$ if and only if there is $s\in S$ such that  every countably generated submodule of $M$ is $S$-finite $($resp., $S$-principal$)$ with respect to $s$.
\end{theorem}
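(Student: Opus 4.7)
The plan is to run the argument of Theorem~\ref{u-countably} in the classical $S$-setting, where the conditions ``$sX \subseteq Y_w$ modulo $\GV$-torsion'' collapse to plain containments $sX \subseteq Y$. The necessity direction is immediate from the definitions.

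For sufficiency, I would fix $s \in S$ witnessing the hypothesis and use the anti-Archimedean property to choose $t \in \bigcap_{n\geq 1} s^n R \cap S$. For an arbitrary submodule $N = \langle f_\alpha \mid \alpha < \gamma\rangle$, I set
$$\Gamma = \{\beta < \gamma \mid sf_\beta \notin \langle f_\alpha \mid \alpha < \beta\rangle\}$$
and claim $tN \subseteq \langle f_\beta \mid \beta \in \Gamma\rangle$. To see this I would imitate the iterative reduction of Theorem~\ref{u-countably}: for each $\beta' \notin \Gamma$, write $sf_{\beta'}$ as a finite $R$-linear combination of $f_\alpha$ with $\alpha < \beta'$, then recursively reduce any summand index $\alpha \notin \Gamma$ by the same process, absorbing the accumulating powers of $s$ into $t$ (this is precisely where the divisibilities $s^n \mid t$ are used). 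The recursion terminates in finitely many steps: otherwise the countable set $A$ of indices produced by the recursion would yield a countably generated submodule $\langle f_\alpha \mid \alpha \in A\rangle$ to which the hypothesis applies, giving $s\langle f_\alpha \mid \alpha \in A\rangle \subseteq \langle f_\beta \mid \beta \in B\rangle$ for some finite $B \subseteq A$, and the boundedness of $B$ forces the iterating chain to stop.

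Next I would show $\Gamma$ itself is finite. If not, pick a countable ascending subset $\Gamma' = \{\beta_1 < \beta_2 < \cdots\} \subseteq \Gamma$. The hypothesis applied to the countably generated submodule $\langle f_\beta \mid \beta \in \Gamma'\rangle$ yields a finite $\Gamma'' \subseteq \Gamma'$ with $s\langle f_\beta \mid \beta \in \Gamma'\rangle \subseteq \langle f_\beta \mid \beta \in \Gamma''\rangle$. Any $\beta_j \in \Gamma'$ exceeding $\max \Gamma''$ then satisfies $sf_{\beta_j} \in \langle f_\alpha \mid \alpha < \beta_j\rangle$, contradicting $\beta_j \in \Gamma$. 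Hence $\Gamma$ is finite, so $tN \subseteq \langle f_\beta \mid \beta \in \Gamma\rangle$ is finitely generated, proving that $N$ is $S$-finite with respect to $t$ and giving the $u$-$S$-Noetherian conclusion with uniform element $t$. For the $u$-$S$-PIM case, the finitely generated submodule $\langle f_\beta \mid \beta \in \Gamma\rangle$ is in particular countably generated, so by the hypothesis there is a principal submodule $\langle g\rangle$ with $s\langle f_\beta \mid \beta \in \Gamma\rangle \subseteq \langle g\rangle$; combining with $tN \subseteq \langle f_\beta \mid \beta \in \Gamma\rangle$ yields $stN \subseteq \langle g\rangle$, giving $u$-$S$-PIM with uniform element $st \in S$.

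The main obstacle is the transfinite bookkeeping inside the iterative-reduction step, but that work has already been performed in Theorem~\ref{u-countably} and is in fact slightly cleaner in the present classical setting since there are no $\GV$-ideals to carry along; the remainder is essentially a mechanical translation from the $w$-setting, so in the write-up it is natural to simply say that the argument is parallel to the proof of Theorem~\ref{u-countably}.
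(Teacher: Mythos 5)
Your proposal is correct and follows exactly the approach the paper intends: the paper's proof of Theorem~\ref{u-countablys} is just the remark that it is similar to Theorem~\ref{u-countably}, and your write-up is the faithful translation of that argument to the classical $S$-setting, with the $\GV$-torsion conditions replaced by plain containments. Your formulation of $\Gamma$ via $sf_\beta \notin \langle f_\alpha \mid \alpha<\beta\rangle$ is equivalent to the direct specialization of the paper's index set, and your extra care in tracking the uniform element ($t$ in the Noetherian case, $st$ in the PIM case) is a reasonable tightening of what the paper leaves implicit.
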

\begin{proof} It is similar to the proof Theorem \ref{u-countably}. We exhibit it for completeness.

 The necessity clearly holds. Now we prove the sufficiency. 	Let $N=\langle f_{\alpha}\mid \alpha<\gamma\rangle$ be an $R$-submodule of $M$, where $\gamma$ is a cardinal. Consider the index set $$\Gamma=\{\beta\mid s(\langle f_{\alpha}\mid\alpha\leq\beta\rangle/\langle f_{\alpha}\mid\alpha<\beta\rangle)\not=0\}.$$
	
Let $t\in\bigcap\limits_{n\geq 1}s^nR\bigcap S$.	Then we claim that $t(N/ \langle f_\beta\mid \beta\in\Gamma\rangle)=0$. Indeed, let  $f_{\beta'}\in N$. If $\beta'\in\Gamma$, we have done. Otherwise, $sf_{\beta'}\subseteq \langle f_{\alpha}\mid\alpha<\beta'\rangle$. Suppose $\beta'$ is a limit ordinal, then  $sf_{\beta'}\subseteq \langle f_{\alpha}\mid\alpha<\beta''\rangle$ for some successor ordinal $\beta''$, since $sf_{\beta'}$ is finitely generated. So we only consider the successor ordinal case. Now assume $\beta'$ is a successor ordinal, then $sf_{\beta'}\subseteq \langle f_{\alpha}\mid\alpha\leq \beta'-1\rangle$, and hence every element in $sf_{\beta'}$ is a finite linear combination of $f_\alpha$ with $\alpha\leq \beta'-1$. If all such finite $\alpha$ are in $\Gamma$, we have done. Otherwise, we may assume each $f_\alpha$ with $\alpha\not\in\Gamma$ satisfy $sf_\alpha$  is a finite linear combination of $f_\alpha$ with $\alpha\leq \beta'-2.$ Hence every element in  $s^2f_{\beta'}$ is a finite linear combination of $f_\alpha$ with $\alpha\leq \beta'-2.$ Such iterating steps terminate in finite steps. Indeed,  on contrary assume the iterating steps don't terminate. Denote by $A$ the set of all such $\alpha$. Then $A$ is countable. Denote by $K$ the $R$-module generated by all such  $f_\alpha$ with $\alpha\in A$. So there is a finitely (resp., principally) generated submodule $K'$ such that $s(K/K')=0$ by assumption. Taking $B$ to be the set of $\beta$ such that $f_{\beta}$ appears in the finite generators of $K'$, we have $B$ is finite and  $sf_{\beta'}\subseteq\langle f_\beta\mid \beta\in B\rangle$. So each $\alpha\in A-B$ is not in $\Gamma$, and hence the iterating steps terminate in finite steps. It follows that there exists  an integer $n$ such that every element in $s^nf_{\beta'}$ is a finite linear combination of $f_\alpha$ with $\alpha\in \Gamma,$ and hence $t(N/ \langle f_\beta\mid \beta\in\Gamma\rangle)=0$ .

If $\Gamma$ is finite, then $N$ is $S$-finite with respect to $t$, and so we have done (resp., by assumption). Otherwise assume $\Gamma$ is infinite. Take $\Gamma'=\{\beta_1<\beta_2<\cdots\}\subseteq \Gamma.$ Note that $\langle f_\beta\mid \beta\in\Gamma'\rangle$ is $S$-finite (resp., $S$-principal) with respect to $s$. Since every element in $\langle f_\beta\mid \beta\in\Gamma'\rangle$ is a finite combination of $f_\beta$, there is a finite subset $\Gamma''$	of $\Gamma'$ such that $s(\langle f_\beta\mid \beta\in\Gamma'\rangle/\langle f_\beta\mid \beta\in\Gamma''\rangle)=0$. So there exists an integer $i$ such that no $\beta_j\in \Gamma'$ with $j\geq i$, which is a contradiction. Consequently, $M$ is a -Noetherian  module (resp., $u$-$S$-PIM) with respect to $t$.
\end{proof}

\begin{corollary}  {\bf (Eakin-Nagata-Formanek Theorem for $u$-$S$-Noetherian modules)}
	Let $R$ be a ring and $S$ an anti-Archimedean multiplicative subset of $R$. Let $M$ be an $R$-module.  Then the following statements are equivalent:
	\begin{enumerate}
		\item $M$ is $u$-$S$-Noetherian;
		\item there exists $s\in S$ such that any  ascending chain of submodules of $M$ is stationary with respect to $s$.
	\end{enumerate}
\end{corollary}
\begin{proof}
It is similar with the proof of Theorem \ref{u-s-noe-char}.
\end{proof}

In the finial part of this section, We will consider some ``local'' characterizations of classical rings and modules in terms of the new ones.

Let $\p$ be a prime ideal of $R$. We say an $R$-module $M$  is  \emph{$(u$-$)\p$-$w$-Noetherian} (resp., \emph{$(u$-$)\p$-$w$-principal ideal}) provided that  $M$ is $(u$-$)(R\setminus\p)$-$w$-Noetherian (resp., \emph{$(u$-$)R\setminus\p$-$w$-principal ideal}). The next result gives a local characterization of $w$-Noetherian modules.
\begin{theorem}\label{s-noe-m-loc-char}
	Let $R$ be a ring and $M$ an $R$-module. Then the following statements are equivalent:
	\begin{enumerate}
		\item  $M$ is  $w$-Noetherian;
		\item   $M$ is  $(u$-$)\p$-$w$-Noetherian for any $\p\in \Spec(R)$;
		\item   $M$ is $(u$-$)\m$-$w$-Noetherian  for any $\m\in \Max(R)$;
		\item   $M$ is $(u$-$)\p$-$w$-Noetherian  for any $\p\in w\mbox{-}\Spec(R)$;
		\item   $M$ is $(u$-$)\m$-$w$-Noetherian for any $\m\in w\mbox{-}\Max(R)$.
	\end{enumerate}
\end{theorem}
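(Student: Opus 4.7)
The plan is to arrange the proof so that only $(5)\Rightarrow(1)$ carries any content. For $(1)\Rightarrow(2),(3),(4),(5)$, observe that $\{1\}\subseteq R\setminus\p$ for every prime ideal $\p$, and apply Proposition \ref{consar}(1) to upgrade the $\{1\}$-version to the $(R\setminus\p)$-version. The implications $(2)\Rightarrow(3)$, $(2)\Rightarrow(4)$, and $(4)\Rightarrow(5)$ come from the same principle applied to the corresponding inclusions of multiplicative sets. For $(3)\Rightarrow(5)$, any $\m'\in w\mbox{-}\Max(R)$ is prime by \cite[Theorem 6.2.14]{fk16}, hence contained in some ordinary maximal ideal $\m$, so $R\setminus\m\subseteq R\setminus\m'$ and Proposition \ref{consar}(1) transfers the hypothesis once more.

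For the main implication $(5)\Rightarrow(1)$, fix a submodule $N$ of $M$; the goal is to produce a finitely generated $K\subseteq N$ with $N/K$ being $\GV$-torsion, which will exhibit $N$ as $w$-finite via the $w$-epimorphism from a finitely generated free cover of $K$ composed with $K\hookrightarrow N$. For each $\m\in w\mbox{-}\Max(R)$, hypothesis (5) supplies $s_\m\in R\setminus\m$ and a finitely generated $K_\m\subseteq N$ with $s_\m(N/K_\m)$ being $\GV$-torsion. The key observation is that $I=\sum_{\m}Rs_\m$ meets no maximal $w$-ideal of $R$, so by the standard identification $I_w=R$, the ideal $I$ must contain some $\GV$-ideal $J_0$. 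Since $J_0$ is finitely generated, $J_0\subseteq\langle s_{\m_1},\dots,s_{\m_n}\rangle$ for finitely many indices $\m_1,\dots,\m_n$, and we set $K=K_{\m_1}+\cdots+K_{\m_n}$.

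It remains to verify that $N/K$ is $\GV$-torsion. Given $x\in N$, for each $i\in\{1,\dots,n\}$ the $\GV$-torsion of $s_{\m_i}(N/K_{\m_i})$ yields $J_i(x)\in\GV(R)$ with $J_i(x)s_{\m_i}x\subseteq K_{\m_i}\subseteq K$. Letting $J(x)=J_1(x)\cdots J_n(x)\cdot J_0\in\GV(R)$ and expanding each $j_0\in J_0$ as $\sum_i r_is_{\m_i}$ yields $J(x)\,x\subseteq K$, so $x+K$ is $\GV$-torsion. Hence $N/K$ is $\GV$-torsion, $N$ is $w$-finite, and $M$ is $w$-Noetherian. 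The main obstacle is this patching step: converting uncoordinated local data $\{s_\m,K_\m\}_{\m}$ into a single finite witness of $w$-finiteness relies on the $w$-theoretic dichotomy that an ideal not contained in any maximal $w$-ideal must contain a $\GV$-ideal, and this is what permits the reduction to finitely many $\m_i$ that makes the whole argument close.
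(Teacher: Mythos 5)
Your proof is correct and carries out the same patching argument the paper uses for the nontrivial implication, only made more explicit: you spell out the extraction of a $\GV$-ideal $J_0$ from $\langle s_\m \mid \m\in w\text{-}\Max(R)\rangle_w = R$ and the subsequent product-of-$\GV$-ideals calculation, both of which the paper compresses into ``one can easily verify,'' and the added step $(3)\Rightarrow(5)$ via containment of each maximal $w$-ideal in an ordinary maximal ideal cleanly avoids proving $(3)\Rightarrow(1)$ and $(5)\Rightarrow(1)$ separately as the paper does. One small wording slip: ``$I$ meets no maximal $w$-ideal'' should read ``$I$ is not contained in any maximal $w$-ideal.''
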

\begin{proof} 
	
	$(1)\Rightarrow (2)\Rightarrow (3)$  and $(1)\Rightarrow (2)\Rightarrow (4)\Rightarrow (5):$  Trivial.
	
	$(3)\Rightarrow (1):$ Let $N$ be a submodule of $M$. Then for each  $\m\in \Max(R)$, there exists an element $s^{\m}\in R\setminus\m$ and a finitely generated subideal $K^{\m}$ of $N$ such that $s^{\m}(N/K^{\m})$ is $\GV$-torsion. Since $\{s^{\m} \mid \m \in \Max(R)\}$ generated $R$, there exist finite elements $\{s^{\m_1},...,s^{\m_n}\}$ such that $\{s^{\m_1},...,s^{\m_n}\}$ generates $R$. Set $K=\sum\limits_{i=1}^nK^{\m_i}$. Then $K$ is finitely generated and $s^{\m_i}(N/K)$ is $\GV$-torsion for each $i=1,\dots,n$. So	we have $N/K=\langle s^{\m_1},\dots,s^{\m_n}\rangle(N/K)$ is also $\GV$-torsion. So $N$ is $w$-finite, and hence $M$ is a $w$-Noetherian module.
	
	$(5)\Rightarrow (1):$  Similar to 	$(3)\Rightarrow (1)$. We only note that 
	since the ideal generated by  $\{s^{\m} \mid \m \in \Max(R)\}$  is not contained in any maximal $w$-ideal of $R$, $\langle s^{\m} \mid \m \in \Max(R)\rangle_w=R$. So one can easily verify that $\langle s^{\m_1},\dots,s^{\m_n}\rangle(N/K)$ is $\GV$-torsion implies that $N/K=\langle s^{\m_1},\dots,s^{\m_n}\rangle_w(N/K)$ is also $\GV$-torsion in the proof of $(3)\Rightarrow (1)$. 
\end{proof}

Recall that a ring $R$ is called a  \emph{generalized  ZPI-ring} if every proper ideal of $R$ is a product of prime ideals. It is well-known that a ring $R$ is a generalized  ZPI-ring if and only if it is a finite direct product of Dedekind domains and PIRs.
It was proved in  \cite[Corollary 13]{ad02} that a ring  $R$ is a generalized  ZPI-ring if and only if  $R$ is a $\m$-PIR for every $\m\in \Max(R)$, if and only if  $R$ is a $\p$-PIR for every $\p\in \Spec(R)$. Now we consider the $w$-version of this result.  

Recall from \cite{J23} that a ring $R$ is called a  \emph{generalized $w$-ZPI}-ring if every proper $w$-ideal of $R$ is a $w$-product of prime $w$-ideals. It was proved in \cite[Theorem 2.3]{J23}  that a ring $R$ is a generalized  $w$-ZPI-ring if and only if $R$ is a $w$-Noetherian ring and $R_\p$ is a $w$-PIR for any maximal (or prime) $w$-ideal $\p$ of $R$,  if and only if it is a finite direct product of Krull domains and PIRs.

\begin{corollary}\label{s-wzpi-m-loc-char}
	Let $R$ be a ring. Then the following statements are equivalent:
	\begin{enumerate}
		\item  $R$ is a generalized $w$-ZPI-ring;
		\item   $R$ is a $\p$-$w$-PIR for any $\p\in w\mbox{-}\Spec(R)$;
		\item   $R$ is an $\m$-$w$-PIR for any $\m\in w\mbox{-}\Max(R)$.
	\end{enumerate}
\end{corollary}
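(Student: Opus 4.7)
The plan is to run the cycle $(1) \Rightarrow (2) \Rightarrow (3) \Rightarrow (1)$, with $(2) \Rightarrow (3)$ immediate since every $\m \in w\mbox{-}\Max(R)$ lies in $w\mbox{-}\Spec(R)$. The two external engines are \cite[Theorem 2.3]{J23}, which equates ``$R$ is a generalized $w$-ZPI-ring'' with ``$R$ is $w$-Noetherian and $R_\p$ is a $w$-PIR for every (equivalently, every maximal) $w$-prime $\p$'', together with Theorem \ref{s-noe-m-loc-char}, the local-global principle for $w$-Noetherianness.

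For $(1) \Rightarrow (2)$, fix $\p \in w\mbox{-}\Spec(R)$ and an ideal $I$ of $R$. By \cite[Theorem 2.3]{J23}, $R$ is $w$-Noetherian and $R_\p$ is a $w$-PIR. First I would use $w$-Noetherianness to pick a finitely generated subideal $F = \langle x_1, \ldots, x_n \rangle \subseteq I$ with $I/F$ being $\GV$-torsion. Then, since $R_\p$ is a $w$-PIR, choose $c \in I$ with $I_\p = \langle c/1 \rangle_w$ in $R_\p$. Each relation $x_i/1 \in \langle c/1 \rangle_w$ translates, after unfolding a $\GV(R_\p)$-ideal witness and clearing denominators, into $t_i x_i \in \langle c \rangle$ for some $t_i \in R \setminus \p$. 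Setting $s = t_1 \cdots t_n \in R \setminus \p$ yields $sF \subseteq \langle c \rangle$, and since $I/F$ is already $\GV$-torsion, so is $s(I/\langle c \rangle)$. Hence $I$ is $(R \setminus \p)$-$w$-principal.

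For $(3) \Rightarrow (1)$, the hypothesis makes $R$ both $\m$-$w$-PIR and $\m$-$w$-Noetherian for every $\m \in w\mbox{-}\Max(R)$, so Theorem \ref{s-noe-m-loc-char} forces $R$ to be $w$-Noetherian. To invoke \cite[Theorem 2.3]{J23} it remains to verify each $R_\m$ is a $w$-PIR. Given an ideal of $R_\m$, contract to $I \subseteq R$ and apply $(3)$ to produce $s \in R \setminus \m$ and $c \in I$ with $s(I/\langle c \rangle)$ being $\GV$-torsion. Localizing at $\m$ turns $s$ into a unit and collapses the $\GV$-torsion (since $R_\m$ is a $\DW$-ring), giving $I_\m = (c/1)R_\m$. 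Thus $R_\m$ is in fact a PIR, hence a $w$-PIR, and \cite[Theorem 2.3]{J23} delivers $(1)$.

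The main obstacle I foresee lies inside $(1) \Rightarrow (2)$: translating a $w$-principal statement over $R_\p$ (which a priori involves $\GV(R_\p)$-ideals and the $w$-envelope taken inside $R_\p$) back into an honest principal containment in $R$ up to multiplication by a single $s \in R \setminus \p$. The $\DW$-ness of $R_\m$ at a maximal $w$-ideal simplifies the analogous step in $(3) \Rightarrow (1)$, but for a general prime $w$-ideal one must handle $\GV(R_\p)$ directly, since $\GV$-ideals of $R$ do not always pass cleanly to $\GV$-ideals of $R_\p$.
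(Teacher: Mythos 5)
Your cycle $(2)\Rightarrow(3)\Rightarrow(1)$ matches the paper: $(2)\Rightarrow(3)$ is immediate, and in $(3)\Rightarrow(1)$ you correctly combine Theorem \ref{s-noe-m-loc-char} (to get $w$-Noetherianness) with the observation that localizing at $\m\in w\mbox{-}\Max(R)$ annihilates $\GV$-torsion and inverts $s$, so each $R_\m$ is a PIR; then Juett's \cite[Theorem~2.3]{J23} finishes. That part is fine and even a bit more explicit than the paper's.

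The gap is in $(1)\Rightarrow(2)$, and you have put your finger on it yourself. From $x_i/1\in\langle c/1\rangle_{w}$ (the $w$-envelope taken over $R_\p$) you extract a witness $J'\in\GV(R_\p)$ with $J'(x_i/1)\subseteq\langle c/1\rangle$. Clearing denominators only yields $t_iJx_i\subseteq\langle c\rangle$ for some finitely generated $J\subseteq R$ whose localization is $J'$; it does not yield $t_ix_i\in\langle c\rangle$, nor even $t_ix_i\in\langle c\rangle_w$ over $R$, because $J_\p\in\GV(R_\p)$ does not force $J\in\GV(R)$. So the chain ``unfolding a $\GV(R_\p)$-witness and clearing denominators gives $t_ix_i\in\langle c\rangle$'' is unjustified for an arbitrary $\p\in w\mbox{-}\Spec(R)$. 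This is exactly the obstacle your last paragraph flags, and the proof does not resolve it.

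The paper sidesteps this by \emph{not} working with $R_\p$ being a $w$-PIR directly. It invokes the structure half of \cite[Theorem~2.3]{J23}: a generalized $w$-ZPI-ring is a finite product of Krull domains and PIRs, and by Proposition~\ref{mutl-usm} one reduces to a single factor. In a Krull domain, the prime $w$-ideals are $0$ and the height-one primes, so $R_\p$ is a field or a DVR; in a PIR every localization is a PIR; in both cases $R_\p$ is a $\DW$ ring, so $\langle c/1\rangle_w=\langle c/1\rangle$ and the $\GV(R_\p)$-witness you were wrestling with disappears. Then, because $R$ is $w$-Noetherian, one may replace $I$ by a finitely generated $F$ with $F_w=I_w$ (using that $S$-$w$-principal is closed under $w$-isomorphism), and plain denominator-clearing over the PID $R_\p$ gives $tF\subseteq\langle c\rangle$, hence $tI\subseteq\langle c\rangle_w$. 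If you want to retain your ``direct'' route, you must first prove that $R_\p$ is $\DW$ for every $\p\in w\mbox{-}\Spec(R)$ of a generalized $w$-ZPI-ring; but the only reasonable way to see that is again the structure theorem, so you end up where the paper is.
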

\begin{proof}

	$(1)\Rightarrow (2)$: Suppose  $R$ is a generalized $w$-ZPI-ring. Let $\p$ be a prime $w$-ideal of $R$. Then  $R_\p$ is a $w$-PIR. Since  $R$ is a finite direct product of Krull domains and PIRs. We may assume $R$ is a  Krull domain. Let $I$ be a $w$-ideal of $R$, then $I$ is finitely generated and $I_\p=\langle \frac{r}{s}\rangle$ as $R_\p$-module, and so $tI\subseteq \langle r\rangle\subseteq I$ for some $t\in R-\p$. Hence $R$ is a $\p$-$w$-PIR.
	
	$(2)\Rightarrow (3)$: Trivial.
	
	$(3)\Rightarrow (1)$: Since  $R$ is an $\m$-$w$-PIR for any $\m\in w\mbox{-}\Max(R)$, then $R$ is $w$-Noetherian by Theorem \ref{s-noe-m-loc-char}. Note that $R_\m$ is a $w$-PIR for any $\m\in w\mbox{-}\Max(R)$.  So $R$ is a generalized $w$-ZPI-ring by \cite[Theorem 2.3]{J23}.
\end{proof}

We can use the uniform version of Corollary \ref{s-wzpi-m-loc-char} to characterize $w$-PIRs.
\begin{corollary}\label{s-wpIR-loc-char}
	Let $R$ be a ring . Then the following statements are equivalent:
	\begin{enumerate}
		\item  $R$ is a $w$-PIR;
		\item   $R$ is a $u$-$\p$-$w$-PIR for any $\p\in w\mbox{-}\Spec(R)$;
		\item   $R$ is a $u$-$\m$-$w$-PIR for any $\m\in w\mbox{-}\Max(R)$.
	\end{enumerate}
\end{corollary}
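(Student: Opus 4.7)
The plan follows the same template as the previous two corollaries. The implications $(1)\Rightarrow(2)\Rightarrow(3)$ are immediate: if $R$ is a $w$-PIR, then every ideal is $w$-principal, so trivially $u$-$\p$-$w$-principal with the uniform element $s=1\in R\setminus \p$; and any $u$-$\p$-$w$-PIR over all prime $w$-ideals is in particular such over all maximal $w$-ideals.

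For $(3)\Rightarrow(1)$, first observe that a $u$-$\m$-$w$-PIR is in particular a $u$-$\m$-$w$-Noetherian, hence an $\m$-$w$-Noetherian, and also an $\m$-$w$-PIR. So I would feed (3) into both Theorem \ref{s-noe-m-loc-char} and Corollary \ref{s-wzpi-m-loc-char} to conclude that $R$ is simultaneously $w$-Noetherian and a generalized $w$-ZPI-ring; by \cite[Theorem 2.3]{J23} this means $R$ is a finite direct product of Krull domains and PIRs.

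Next I would take an arbitrary ideal $I$ of $R$ and try to produce a single $c\in I$ such that $I/\langle c\rangle$ is $\GV$-torsion. Using $w$-Noetherianness I would first replace $I$ by a finitely generated subideal $K\subseteq I$ with $K_w=I_w$, since the class of $w$-principal modules is closed under $w$-isomorphism (so it suffices to prove $K$ is $w$-principal). Then for each $\m\in w\mbox{-}\Max(R)$ the hypothesis supplies a uniform $s^{\m}\in R\setminus \m$ and, applied specifically to $K$, an element $c^{\m}\in K$ with $s^{\m}(K/\langle c^{\m}\rangle)$ $\GV$-torsion. Because the ideal generated by $\{s^{\m}\}_{\m}$ lies in no maximal $w$-ideal, its $w$-envelope is $R$, so finitely many $s^{\m_1},\dots,s^{\m_n}$ satisfy $\langle s^{\m_1},\dots,s^{\m_n}\rangle_{w}=R$, exactly as in the proof of Theorem \ref{s-noe-m-loc-char}.

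The hard part that I expect to be the main obstacle is the last step: patching the finitely many locally chosen principal subideals $\langle c^{\m_i}\rangle$ into one global principal subideal. The argument that worked in Theorem \ref{s-noe-m-loc-char} only produces the finitely generated subideal $K'=\langle c^{\m_1},\dots,c^{\m_n}\rangle$ with $K/K'$ $\GV$-torsion, which gives $w$-finiteness but not $w$-principality. To upgrade this, I would split $R$ via the generalized $w$-ZPI decomposition $R\cong D_1\times\cdots\times D_k$ and reduce to each factor: PIR factors are automatic, while for a Krull factor $D$ I would argue that the uniform element $s^{\m}$, whose divisor has only finitely many height-one primes in its support, forces every class of the $w$-class group of $D$ to be represented by an integral divisorial ideal bounded by $(s^{\m})$; combining this over all $\m$ I would hope to kill the class group and conclude that $D$ is a UFD, hence a $w$-PIR. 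Assembling the factors then yields that $R$ itself is a $w$-PIR, closing the implication.
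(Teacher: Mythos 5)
Your reduction to a finite direct product of Krull domains and PIRs via Corollary \ref{s-wzpi-m-loc-char} and Proposition \ref{mutl-usm} matches the paper's first move, and you are right that the naive patching of the locally chosen principal subideals $\langle c^{\m_1}\rangle,\dots,\langle c^{\m_n}\rangle$ only yields $w$-finiteness, so that detour should indeed be abandoned.

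The gap is in the Krull-domain step, and you essentially concede it yourself. Your class-group sketch is not an argument: representing each class by a divisor bounded by $\mathrm{div}(s^{\m})$ would at best bound the class group, and you give no mechanism to kill it — ``I would hope'' is not a proof. The paper finishes this step with a completely different and much shorter route that your proposal never touches: argue by contradiction. If the Krull domain $D$ is not a UFD, pick a non-principal maximal $w$-ideal $\m$, let $s\in D\setminus\m$ be the uniform element furnished by hypothesis (3), and invoke Proposition \ref{s-loc-u-noe} (applicable since $D\setminus\m$ is a regular multiplicative set in a domain) to conclude that the localization $D_s$ is a $w$-PID; the paper then asserts that $\m D_s$ is not principal, yielding the contradiction. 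The concrete tool your proposal lacks is Proposition \ref{s-loc-u-noe}: you never localize at the uniform $s$, which is exactly what makes the Krull-domain case tractable. (A caveat worth flagging: the paper's own final assertion that $\m D_s$ is non-principal is delicate, since applying the uniform hypothesis to $\m$ itself gives $s\m\subseteq\langle c\rangle\subseteq\m$ for some $c$, which makes $\m D_s=\langle c/1\rangle$ principal; so the stated contradiction needs a more careful formulation. But the localization at $s$ is unambiguously the idea missing from your attempt.)
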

\begin{proof} $(1)\Rightarrow (2)$ and  $(2)\Rightarrow (3)$: Trivial.
	
	$(3)\Rightarrow (1)$: It follows by Corollary \ref{s-wzpi-m-loc-char} that  $R$ is a generalized $w$-ZPI-ring, which is a finite direct product of Krull domains and PIRs. We may assume that $R$ is a Krull domain by Proposition \ref{mutl-usm}. If $R$ is a $w$-PID (i.e., UFD), we have done. Otherwise, there is a non-principal maximal $w$-ideal $\m$.  Let $s\in R-\m$ such that every ideal of $R$ is $S$-$w$-principal with respect to $s$. It follows by Proposition \ref{s-loc-u-noe} that  $R_s$ is a $w$-PID which is a contradiction since $\m_s$ is not a principal.	
\end{proof} 

Concentrating to the $\DW$ ring case, we have the following characterizations of PIRs.

\begin{corollary}\label{s-wpIR-loc-char}
Let $R$ be a ring. Then the following statements are equivalent:
	\begin{enumerate}
		\item  $R$ is a PIR;
		\item   $R$ is a $u$-$\p$-PIR for any $\p\in \Spec(R)$;
		\item   $R$ is a $u$-$\m$-PIR for any $\m\in \Max(R)$.
	\end{enumerate}
\end{corollary}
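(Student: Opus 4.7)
The plan is to mirror the proof of Corollary \ref{s-wpIR-loc-char}, specialized to the $\DW$ setting where $w$-operations reduce to the identity (recall that Dedekind domains are $\DW$ rings, so $w$-PIR coincides with PIR on the relevant factors arising below). The implications $(1)\Rightarrow(2)$ and $(2)\Rightarrow(3)$ are immediate: a PIR satisfies every uniformity statement with $s=1$, and $\Max(R)\subseteq\Spec(R)$.

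For the main direction $(3)\Rightarrow(1)$, I would first observe that condition $(3)$ implies, a fortiori, that $R$ is $\m$-PIR for every $\m\in\Max(R)$. By the classical result \cite[Corollary 13]{ad02}, $R$ is then a generalized ZPI-ring, and thus a finite direct product of Dedekind domains and PIRs. Using Proposition \ref{mutl-usm}, I reduce to the case where $R$ is a single Dedekind domain (the PIR factors being trivially PIRs). If $R$ is already a PID, we are done; otherwise there is a non-principal maximal ideal $\m$, and by $(3)$ we may select $s\in R\setminus\m$ such that every ideal of $R$ is $S$-principal with respect to $s$. Since Dedekind domains are $\DW$ rings, Proposition \ref{s-loc-u-noe} yields that $R_s$ is a PIR, and being a localization of a Dedekind domain, it is a PID.

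The main obstacle is the final step: extracting a contradiction from $R_s$ being a PID while $\m$ is non-principal in $R$. This is precisely the delicate point of Corollary \ref{s-wpIR-loc-char}: one would like to conclude that $\m_s$ is non-principal in $R_s$, but in general localizing a Dedekind domain at a single element can trivialize non-principal classes in the class group. The argument must therefore exploit the full strength of the uniformity—that every ideal of $R$, not merely $\m$, is $S$-principal with respect to the same element $s$—and is carried out exactly in parallel with the preceding $w$-version corollary, so that $\m_s$ retains the non-principal obstruction, contradicting $R_s$ being a PID.
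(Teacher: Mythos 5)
You correctly put your finger on the weak step, but the attempted patch runs exactly backwards. Apply the uniform $S$-principality (with the fixed $s$) to the ideal $I=\m$ itself: there is $c\in\m$ with $s\m\subseteq (c)\subseteq\m$. Localizing at $s$, which is a unit in $R_s$, gives $\m_s=s\m_s\subseteq (c)_s\subseteq\m_s$, so $\m_s=(c)_s$ is \emph{principal} in $R_s$. Thus the uniformity hypothesis is precisely what forces $\m_s$ to become principal; there is no surviving obstruction and no contradiction to draw. The same flaw sits in the paper's proof of the preceding $w$-version corollary, whose $(3)\Rightarrow(1)$ step ends with the unjustified assertion that ``$\m_s$ is not a principal.''

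In fact the statement is false as written. Let $R=\mathbb{Z}[\sqrt{-5}]$, a Dedekind domain (hence a $\DW$ ring and a generalized ZPI-ring) with class group of order two, so $R$ is not a PIR. Given any $\m\in\Max(R)$, choose a rational prime $q$ with $q\notin\m$ and $q$ not inert in $R$; then the primes of $R$ above $q$ lie in the nontrivial ideal class. Put $s=q$. If $I$ is principal, say $I=(c)$, then $sI\subseteq (c)\subseteq I$. If $I$ is non-principal, its class is nontrivial, so for a prime $\p$ above $q$ the product $I\p=(c)$ is principal, and $sI=\p\bar{\p}\,I\subseteq\p I=(c)\subseteq I$ (resp.\ $sI=\p^2 I\subseteq\p I=(c)\subseteq I$ when $q$ ramifies). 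Hence $R$ is a $u$-$\m$-PIR for every maximal ideal $\m$, yet $R$ is not a PIR. Since Dedekind domains are $\DW$, this is equally a counterexample to the preceding $w$-version corollary from which the present statement is meant to be deduced, so neither your approach nor the paper's can be repaired without changing the hypotheses.
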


\begin{acknowledgement}
	
	The author would like to thank the reviewers for many valuable suggestions on revision of this paper. 
\end{acknowledgement}

\end{document}